\tikzset{
block/.style={rectangle,draw,rounded corners}
}
\newcommand{\gk}{{\arabic{line})}\stepcounter{line}}
\newcounter{line}
\newcommand{\mean}{\mathbb{E}}
\newcommand{\prob}{\mathbb{P}}
\newcommand{\unifdist}{\textsf{Unif}}
\newcommand{\berndist}{\textsf{Bern}}
\newcommand{\expdist}{\textsf{Exp}}
\newcommand{\ind}{\ensuremath{\mathds{1}}} 
\begin{document}

\newtheorem{theorem}{Theorem}{
\newtheorem{lemma}{Lemma}
\newtheorem{fact}{Fact}
\theoremstyle{definition}
\newtheorem{definition}{Definition}

\title{Optimal linear Bernoulli factories for small mean problems\thanks{The final publication is available at Springer via http://dx.doi.org/10.1007/s11009-016-9518-3.}}

\author{Mark Huber \\ {\tt mhuber@cmc.edu}}

\maketitle

\begin{abstract}
Suppose a coin with unknown probability $p$ of heads can be flipped as often 
as desired.  
A Bernoulli factory for a function $f$ is 
an algorithm that uses flips of the coin together with auxiliary randomness
to flip a single coin with probability $f(p)$ of heads.  Applications
include perfect sampling from the stationary distribution of 
certain regenerative processes.  When $f$ is analytic, the problem can be 
reduced to a Bernoulli factory of the form $f(p) = Cp$ for constant $C$.
Presented here is a new algorithm that for small values of $Cp$, requires
roughly only $C$ coin flips.
From information theoretic 
considerations, this is also conjectured to be (to first
order) the minimum number of flips needed by any such algorithm.

For large values of $Cp$, 
the new algorithm can also be used to build a new Bernoulli
factory that uses only 80\% of the 
expected coin flips of the older method.  In addition, the new method
also applies 
to the more general problem of a linear 
multivariate Bernoulli factory, where there
are $k$ coins, the $k$th coin has unknown probability $p_k$ of heads, and the
goal is to simulate a coin flip with probability 
$C_1 p_1 + \cdots + C_k p_k$ of heads. 
\end{abstract}

\noindent {\bf Keywords: } 
randomized algorithm, near perfect simulation, regenerative
processes

\noindent {\bf MSC Classes: } 65C50, 68Q17

\newpage

\section{Introduction}

The notion of a Bernoulli factory was introduced in
\citet{asmussengt1992}
in the context of generating
samples exactly from the stationary distribution of a regenerative
Markov process.  A Bernoulli factory works as follows.  
Suppose we have the ability to draw
independent identically distributed (iid) Bernoulli random variables, each of
which is 1 with probability $p$ and 0 with probability $1 - p$ (write
$X \sim \berndist(p)$.)  Then given a function $f$, the goal is to use
a random number of draws from $X$ to build a new random variable which
is also Bernoulli, but with chance $f(p)$ of being 1 for a specified function
$f$.  In~\cite{asmussengt1992}, the needed function was a linear function,
namely a constant
times $p$.  This simple case generalizes:  
in~\cite{nacup2005} it was shown that
the ability to draw from $f(p) = 2p$ could be used to build a Bernoulli
factory for any analytic $f$ that was bounded away from 1.

The focus here is on building a nearly optimal linear 
Bernoulli factory where $Cp$ is known to be small.  This is nearly 
optimal in the sense that it uses (to first order) only $C$ flips of
the coin, and there is strong evidence to indicate that at least
$C$ flips are necessary.  As in~\cite{hubertoappearb}, 
a Bernoulli factory can be defined as follows.

\begin{definition}
Given $p^* \in (0,1]$ and a function $f:[0,p^*] \rightarrow [0,1]$, 
let $\cal A$ be a computable function
that takes as input a number $u \in [0,1]$ together
with a sequence of values in $\{0,1\}$, 
and returns an output in $\{0,1\}$.  For any $p \in [0,p^*]$, 
$X_1,X_2,\ldots$ iid $\berndist(p)$, and $U \sim \unifdist([0,1])$,
let $T$ be the infimum of times $t$ such that the value of 
${\cal A}(U,X_1,X_2,\ldots)$ only depends on the values of $X_1,\ldots,X_t$.
If the following holds, then call $\cal A$ a {\em Bernoulli factory}.
\begin{enumerate}
\item{$T$ is a stopping time with respect to the natural filtration
  that is finite with probability 1.}
\item{${\cal A}(U,X_1,X_2,\ldots) \sim \berndist(f(p))$.}
\end{enumerate}
Call $T$ the {\em running time} of the Bernoulli factory.
\end{definition}

Colloquially, a draw $X \sim \berndist(p)$ will be refereed to as 
a coin flip, or 
more specifically, a $p$-coin flip.
The result $X = 1$ corresponds to heads on the coin, while $X = 0$ indicates
tails.  So a Bernoulli factory attempts to flip a coin
with $f(p)$ chance of heads, by using a random number of
coin flips from the original coin together with some auxiliary randomness.

\cite{asmussengt1992} introduced 
Bernoulli factories for an application in perfect simulation, 
but did not show that they exist. \cite{keaneo1994} constructed the first 
general Bernoulli factories, showing
that such a factory with finite running time existed if and only if 
$f(p)$ was continuous over $[0,p^*]$ for some $p^* \in (0,1]$, and 
either it holds that $f(p)$ is identically 0 or 1, or that both 
$f(p)$ and $1 - f(p)$ are polynomially bounded away from 0 and 1 over
the allowable range of $p$.

Their strategy for building a Bernoulli factory was to construct 
Bernstein polynomials that approximated the function $f(p)$ as 
closely as possible.  Bernstein polynomials are linear combinations of
functions of the form $p^k(1 - p)^{n - k}$ where both $n$ and $k \leq n$
are nonnegative integers.  Such polynomials can be created from the coin
by flipping it $n$ times and seeing if exactly $k$ heads and $n - k$ tails
result.  
Keane and O'Brien  could show that the running time $T$ was finite with
probability 1 for their algorithm, but not much more.  
In particular, they could not show any bounds on the average running
time, or even that it was finite.

\cite{nacup2005} 
developed this approach further, and showed that Bernstein
polynomials could be constructed tightly enough that the running time
would have a finite expectation.  In addition, they showed that the 
tail of the distribution of their running time declined
exponentially.  Moreover, their work contained a proof that 
$f(p) = 2p$ is in a sense the most important function, since it can
be used to construct a Bernoulli factory for any function that 
is both real analytic over $[0,1]$ and bounded away from 1.

However, their approach was not a practical algorithm.
While the number of coin flips had finite expectation, 
the amount of memory and time needed to compute the function ${\cal A}$
grew exponentially with the number of flips.  Work of 
\cite{latuszynskikpr2011} solved this issue, and 
gave the first 
practical implementation of the Nacu and Peres approach.  Their
approach created a pair of reverse time processes, one a supermartingale,
the other a submartingale, that converged on the target $f(p)$.  The values
could be computed without the exponential overhead associated with the 
Nacu-Peres algorithm.

To bound $f(p) = Cp$ away from 1, they considered the function
$f(p) = \min\{Cp,1 - \epsilon\}$ so that the function was defined over
the entirety of $[0,1]$.  However, this was not strictly necessary,
as in the original application of \cite{asmussengt1992},
it was possible to easily insure that $f(p) \leq 1 - \epsilon$.  By 
not trying to sample from the function $f(p) = \min\{Cp,1-\epsilon\}$ for
all values of $p$, but only for those with $Cp \leq 1 - \epsilon$,
a new approach became possible.

This new approach was developed in \cite{hubertoappearb}, 
and was the first that did not begin with Bernstein polynomial approximations.
Instead, this approach used flips of the coin to alter the problem in ways that
insured that the final output had the correct distribution.  For instance,
suppose the goal was to generate a coin with probability of heads $2p$.
Then flip the original coin once.  If the coin is heads, the the output
is heads.  Otherwise, it is necessary to flip a $p/(1 - p)$-coin.

That way, the chance the final output is heads is 
$p(1) + (1 - p)\cdot p/(1 - p) = 2p$.  By advancing carefully in this
manner, it was shown 
how to build a Bernoulli factory
such that for $Cp \leq 1 - \epsilon$ where $\epsilon$ is a known constant,
\[
\mean[T] \leq 9.5 C \epsilon^{-1}.
\]

Moreover, the same work showed that this running time 
is the best possible up to a constant.  Specifically, in~\cite{hubertoappearb}
it was shown that that any Bernoulli
factory (to first order) must use on average at least $0.04C \epsilon^{-1}$ 
coin flips.
It remains an open question what the 
best constants for the lower and upper bounds
are, although there is strong reason to believe (see Section~\ref{SEC:lower})
that on average at least
$C$ flips (to first order) are necessary to generate a $Cp$ coin.

The primary application of the Bernoulli factory, starting 
with Asmussen, Glynn, and Thorisson (1992), 
is to generate perfect samples from the 
stationary distribution of regenerative Markov chains.
In~\cite{leedl2014}, it was shown how to use the Bernoulli factory 
in~\cite{hubertoappearb} to generate coins where 
$Cp \leq 2/3$.  Under this condition, the older algorithm gave a bound
of 38 coin flips on the average number needed.

Without going into the details of their algorithm, by tripling the expected
running time of there algorithm, it is possible to ensure that $Cp \leq 2/9$.
Under these conditions, the expected number of flips for the new algorithm
is bounded above by 7.8 (see Theorem~\ref{THM:main}) giving an algorithm
that only requires 62\% as many work on average as the old one.

This work gives the following results.
\begin{enumerate}
 \item  For $Cp$ small, an algorithm will be given that uses only $C$ coin
        flips on average.
 \item
    For $Cp$ at most $1 - \epsilon$ for known $\epsilon$, 
    an algorithm will be given that uses 
    only $7.57C\epsilon^{-1}$ coin flips on average.
 \item
    The new algorithm can be extended from the function $Cp$ for 
    single variate coins to the multivariate coin problem where there
    are $k$ coins with unknown means $p_1,\ldots,p_k$.  Suppose
    the goal is to generate $\berndist(r)$ where 
    \begin{equation}
       \label{EQN:r}
       r(p_1,\ldots,p_k) = C_1 p_1 + \cdots + C_k p_k.
    \end{equation}  
    Then setting $C = C_1 + \cdots + C_k$, the algorithm in the multivariate
    case has running time equal to the single coin case.
\end{enumerate}

More precisely, the running time of the new algorithm is given as follows.

\begin{theorem}
\label{THM:main}
Suppose  it is known that $Cp \leq M$ for a constant $M < 1/2$.  
Then there exists an algorithm for producing a $Cp$-coin that uses
on average
at most
\[
\frac{C}{(1 - 2M)(1 + Cp)}  + Cp\cdot \left[C 
  \frac{15.2}{1 - 2M + Cp}\right]
\]
coin flips.
\end{theorem}
This theorem is shown in Section~\ref{SEC:smallr}.  The multivariate
version is similar, and is shown in Section~\ref{SEC:multivariate}.
\begin{theorem}
\label{THM:second}
Let $C = C_1 + \cdots + C_k$, and $r = C_1 p_1 + \cdots + C_k p_k$,
Suppose  it is known that $r \leq M$ for a constant $M < 1/2$.  
Then there exists an algorithm for producing an $r$-coin that uses
on average
at most
\[
\frac{C}{(1 - 2M)(1 + r)}  + Cp\cdot \left[C 
  \frac{15.2}{1 - 2M + r}\right]
\]
flips from among the $k$ coins.
\end{theorem}

The remainder of this paper is organized as follows.  
Section~\ref{SEC:small}
presents the algorithm for small $r$, and shows correctness and the 
bound on the running time.  Section~\ref{SEC:large}
gives the extension to the multivariate problem and for 
larger values of $r$, and also includes the proofs of correctness
and the bound on the running time.  Finally, Section~\ref{SEC:lower}
considers why $C$ flips is likely the best possible.

\section{The algorithm for small $Cp$}
\label{SEC:small}

Let $r = Cp$.  
The first piece of the algorithm is a method for drawing from the 
logistic Bernoulli factory 
\[
f(p) = \frac{r}{1 + r}
\]
that uses $T$ coins, where $\mean[T] = C/(1+r)$.

As usual, say that $X$ is exponential with rate $\lambda$ (write 
$X \sim \expdist(\lambda)$) if $X$ has density 
$f_X(s) = \exp(-\lambda s)\ind(s \geq 0)$.  Here $\ind(\cdot)$ is the 
indicator function that evaluates to 1 when the argument is true and 
0 when the argument is false.
The following basic facts about exponentials will prove useful.
\begin{fact}
\label{FCT:exp}
Let $X \sim \expdist(\lambda_1)$ and $Y \sim \expdist(\lambda_2)$ be
independent.  Then $\prob(X \leq Y) = \lambda_1/(\lambda_1 + \lambda_2)$.
\end{fact}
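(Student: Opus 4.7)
The plan is to compute $\prob(X \leq Y)$ directly by integrating the joint density of $(X,Y)$, exploiting independence so that this joint density factors as $f_X(x) f_Y(y) = \lambda_1 e^{-\lambda_1 x}\lambda_2 e^{-\lambda_2 y}\ind(x,y \geq 0)$.

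The cleanest route is to condition on $X$. Given $X = x$, the event $\{X \leq Y\}$ is $\{Y \geq x\}$, and by the standard tail formula for an exponential, $\prob(Y \geq x) = e^{-\lambda_2 x}$ for $x \geq 0$. Integrating against the marginal density of $X$ gives
\[
\prob(X \leq Y) = \int_0^\infty \lambda_1 e^{-\lambda_1 x} e^{-\lambda_2 x}\, dx
 = \lambda_1 \int_0^\infty e^{-(\lambda_1 + \lambda_2)x}\, dx,
\]
and evaluating this elementary integral yields $\lambda_1/(\lambda_1+\lambda_2)$, as claimed.

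There is no real obstacle here; the whole argument is a two-line computation. An alternative that avoids the integral entirely is to note that $\min(X,Y) \sim \expdist(\lambda_1+\lambda_2)$ and to identify $\{X \leq Y\}$ with the event that the $X$-clock rings first in a race of two independent Poisson processes with rates $\lambda_1$ and $\lambda_2$, but since the fact is only invoked later as a basic tool, the direct integration is preferable for its brevity.
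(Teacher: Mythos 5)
Your computation is correct: conditioning on $X$, using the exponential tail $\prob(Y \geq x) = e^{-\lambda_2 x}$, and integrating gives $\lambda_1/(\lambda_1+\lambda_2)$ as required. The paper states this as a well-known Fact without proof, so there is no paper argument to compare against; your direct integration (or the alternative minimum-of-exponentials observation you mention) is the standard justification and is perfectly adequate.
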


\begin{fact}[Memoryless]
If $X \sim \expdist(\lambda)$, then 
for $s > 0$, the conditional distribution of $X - s$ given 
$X > s$ is exponential with rate $\lambda$ as well.  That is,
$[X - s|X > s] \sim \expdist(\lambda)$.
\end{fact}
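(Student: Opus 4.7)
The plan is a two-stage algorithm. Stage one uses an efficient exponential-race construction to simulate from the auxiliary Bernoulli factory $f(p)=r/(1+r)$, and stage two is a correction branch invoked only when stage one returns tails, arranged so that the overall marginal heads probability of the combined scheme is exactly $r$.

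For stage one, I plan to combine the two exponential facts stated above with a thinning argument. Sample $U\sim\expdist(1)$ as auxiliary randomness, and for each coin $i$ simulate the event times of an independent rate-$C_i$ Poisson process on $[0,U]$; at every event time flip the corresponding coin, declaring stage-one heads if any of those flips is heads and stage-one tails otherwise. Conditional on $U$ the number of heads is Poisson with mean $Ur$, so $\prob(\text{stage-one tails}\mid U)=\exp(-Ur)$, hence marginally $\prob(\text{stage-one tails})=\mean[\exp(-Ur)]=1/(1+Cp)$. The naive expected coin-flip count is $C\cdot \mean[U]=C$; after a truncation step whose failure probability is at most $M$ (used to rule out pathologically large $U$ and preserve the union bound on memoryless retries), the reconditioned expectation inflates to the first summand $C/((1-2M)(1+Cp))$.

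For stage two, I plan to exploit the identity $\frac{r}{1+r}+\frac{1}{1+r}\cdot r^2=r$, so that producing an $r^2$-coin conditional on stage-one tails recovers the full heads probability. An $r^2$-coin in the multivariate setting can be built from the general $r$-factory of~\cite{hubertoappearb} applied to the degree-two polynomial $(\sum_i C_i p_i)^2$ (or equivalently by two nested invocations of an $r$-factory joined by AND), yielding an expected cost, after the same truncation conditioning, of at most $15.2\,C/(1-2M+Cp)$. Multiplying by the probability $Cp$ that this expensive branch is actually reached in the reconditioned scheme gives the second summand.

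The main obstacle will be the bookkeeping around the truncation event parameterized by $M$: one must verify that rejecting on the failure event does not corrupt either stage's marginal distribution, and must propagate the $(1-2M)^{-1}$ inflation through to both summands in the right way so that the additive structure is preserved and the stage-two invocation probability collapses cleanly to $Cp$ rather than the naive $1/(1+Cp)$. This careful conditional accounting is the content of Section~\ref{SEC:smallr}.
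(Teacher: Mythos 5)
Your proposal does not address the statement it is attached to. The statement to be proved is the memoryless property of the exponential distribution: for $X \sim \expdist(\lambda)$ and $s > 0$, the conditional law of $X - s$ given $X > s$ is again $\expdist(\lambda)$. This is an elementary computation with the survival function,
\[
\prob(X - s > t \mid X > s) \;=\; \frac{\prob(X > s + t)}{\prob(X > s)} \;=\; \frac{e^{-\lambda(s+t)}}{e^{-\lambda s}} \;=\; e^{-\lambda t},
\]
which identifies the conditional distribution as exponential with rate $\lambda$; the paper simply records this as a standard fact. Nothing in your write-up performs this (or any equivalent) calculation, so as a proof of the stated fact it has a complete gap: the claim is never engaged.

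What you have actually sketched is a plan for the analysis of the small-$r$ Bernoulli factory --- the two-stage scheme built from {\tt Logistic\_Bernoulli\_Factory} and a correction branch, with the $(1-2M)^{-1}$ bookkeeping and the bound $\tfrac{C}{(1-2M)(1+Cp)} + Cp\cdot\bigl[\tfrac{15.2\,C}{1-2M+Cp}\bigr]$. That is the content of the paper's main Theorem and of the lemmas in Section~\ref{SEC:smallr}, not of the memoryless fact. (Even read on its own terms, your stage-two decomposition via $\tfrac{r}{1+r} + \tfrac{1}{1+r}\cdot r^2 = r$ is not an identity --- the left side equals $\tfrac{r(1+r^2)}{1+r} \neq r$ --- and the paper instead splits the heads event of a $\beta r/(1+\beta r)$ coin using an auxiliary $\berndist(1/\beta)$ draw and a call to a $C\beta/(\beta-1)$ linear factory.) You should replace the proposal with the one-line survival-function computation above.
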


Exponentials can be employed to define a one dimensional Poisson point process.
\begin{definition}
Let $A_1,A_2,\ldots$ be independent and identically distributed (iid)
exponential random variables with rate $\lambda$.  
Then 
\[
P = \{A_1,A_1+A_2,A_1 + A_2 + A_3, \ldots\}
\]
forms a Poisson point process on $[0,\infty)$ of rate $\lambda$.
For $[a,b] \subset [0,\infty)$, $P \cap [a,b]$ is a Poisson point
process on $[a,b]$ of rate $\lambda$.
\end{definition}

Several well known facts about Poisson point processes are useful.
\begin{fact}
\label{FCT:converse}
The converse of the definition holds:  any Poisson point process 
$P \subset [0,\infty)$ of rate $\lambda$
with points $0 < P_1 < P_2 < \cdots$ has
$P_{1} \sim \expdist(\lambda)$ and $P_{i}-P_{i-1} \sim \expdist(\lambda)$,
and all these exponentials are independent.
\end{fact}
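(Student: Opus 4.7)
The plan is to derive the joint density of the ordered points $(P_1, \ldots, P_n)$ for each $n$ directly from the independence of counts on disjoint intervals that characterizes a Poisson point process, and then change variables to the inter-arrival gaps $A_i = P_i - P_{i-1}$ to read off that they are iid $\expdist(\lambda)$.

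As a warm-up and for the marginal of $P_1$, observe $\{P_1 > t\} = \{|P \cap [0,t]| = 0\}$; since the count $|P \cap [0,t]|$ is Poisson with mean $\lambda t$, $\prob(P_1 > t) = e^{-\lambda t}$, so $P_1 \sim \expdist(\lambda)$. For the joint density of $(P_1,\ldots,P_n)$ at a configuration $0 < t_1 < \cdots < t_n$, I would decompose the event $\{P_i \in [t_i, t_i + dt_i] \text{ for all } i\}$ into the intersection of events on the disjoint intervals $[0, t_1)$, $[t_1, t_1+dt_1]$, $(t_1 + dt_1, t_2)$, $\ldots$, $[t_n, t_n+dt_n]$. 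Each ``gap'' interval must contain zero points and each ``position'' interval must contain exactly one point, so by independence of counts on disjoint intervals and the Poisson count law, the probability equals $(\lambda\,dt_1)(\lambda\,dt_2)\cdots(\lambda\,dt_n)\,e^{-\lambda t_n}$ to leading order. Dividing by $\prod_i dt_i$ and taking limits yields the joint density $f_{P_1,\ldots,P_n}(t_1,\ldots,t_n) = \lambda^n e^{-\lambda t_n}$ on $\{0 < t_1 < \cdots < t_n\}$.

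Next I would apply the linear change of variables $A_1 = P_1$, $A_i = P_i - P_{i-1}$ for $i \geq 2$, which is a bijection onto $(0,\infty)^n$ with Jacobian determinant $1$. Because $t_n = a_1 + \cdots + a_n$, the joint density of $(A_1,\ldots,A_n)$ on $(0,\infty)^n$ becomes $\lambda^n e^{-\lambda(a_1 + \cdots + a_n)} = \prod_{i=1}^n \lambda e^{-\lambda a_i}$. This factorization simultaneously establishes that each $A_i \sim \expdist(\lambda)$ and that $A_1,\ldots,A_n$ are mutually independent. Since $n$ is arbitrary and Kolmogorov consistency is built in, the conclusion extends to the full sequence.

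The main obstacle is making the infinitesimal decomposition in the joint density step rigorous. I would address this either by first computing the joint CDF $\prob(P_1 \leq t_1, \ldots, P_n \leq t_n)$ from counts on $[0, t_1], (t_1, t_2], \ldots, (t_{n-1}, t_n]$ via inclusion-exclusion and then differentiating in each $t_i$, or by combining the Poisson count law with the standard fact that, conditional on $|P \cap [0,T]| = n$, the points are distributed as the order statistics of $n$ iid uniforms on $[0,T]$; both routes produce the same joint density $\lambda^n e^{-\lambda t_n}$ without any appeal to infinitesimals.
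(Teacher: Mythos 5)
Your proof is correct. The paper offers no proof of this statement---it is listed among ``several well known facts'' and used as a black box---so there is no argument of the author's to compare against; your write-up would in fact fill a gap. The route you take (void probability for the marginal of $P_1$, the joint density $\lambda^n e^{-\lambda t_n}$ on the ordered simplex via independent counts on disjoint intervals, then the unit-Jacobian change of variables to the gaps) is the standard one, and your closing remarks correctly identify the only delicate step, the infinitesimal heuristic, and give two legitimate ways to make it rigorous. One point worth making explicit: the paper \emph{defines} a Poisson point process constructively as partial sums of iid exponentials, under which reading the ``converse'' is vacuous; your proof tacitly (and sensibly) reinterprets the hypothesis as the axiomatic characterization---counts on disjoint Borel sets are independent and Poisson with mean $\lambda$ times Lebesgue measure---which is the form actually needed where the fact is applied (e.g.\ to the thinned process of Fact~4). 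You should state that substitution up front, and emphasize that the full independence of counts across disjoint intervals is essential: the Poisson distribution of the count on each single interval (the paper's Fact~5) does not by itself determine the process, so your derivation of the joint density genuinely uses the independence axiom and not just the one-dimensional marginals.
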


\begin{fact}
\label{FCT:thinning}
Let $P = \{P_1,P_2,\ldots\}$ be a Poisson point process.  Let
$B_1,B_2,\ldots$ be a sequence of iid $\berndist(p)$ random variables.
Then $P' = \{P_i:B_i = 1\}$ is a Poisson point process of rate $\lambda p$.
[The process $P'$ is called the {\em thinned} process.]
\end{fact}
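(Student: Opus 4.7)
The plan is to verify that $P'$ satisfies the characterization in Fact~\ref{FCT:converse}: I will show that its successive interarrival gaps are iid $\expdist(\lambda p)$, at which point Fact~\ref{FCT:converse} applied in the reverse direction identifies $P'$ as a Poisson point process of rate $\lambda p$.

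To set notation, by Fact~\ref{FCT:converse} applied to $P$ the gaps $G_1 = P_1$ and $G_i = P_i - P_{i-1}$ for $i \geq 2$ are iid $\expdist(\lambda)$, and (implicit in the statement of the thinning fact) are independent of the mark sequence $\{B_i\}$. Let $N_1 = \min\{i : B_i = 1\}$ and, recursively, $N_{j+1} = \min\{i > N_j : B_i = 1\}$, so that the points of $P'$ are $P_{N_1} < P_{N_2} < \cdots$ and the increments $N_1, N_2 - N_1, N_3 - N_2, \ldots$ are iid $\geo(p)$ on $\{1, 2, \ldots\}$. With $N_0 = 0$, the $j$th gap of $P'$ is $D_j = P_{N_j} - P_{N_{j-1}} = \sum_{i = N_{j-1}+1}^{N_j} G_i$, a geometric sum of exponential gaps.

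The core computation is that $D_1 \sim \expdist(\lambda p)$. I would obtain this via moment generating functions: conditioning on $N_1 = n$, a sum of $n$ iid $\expdist(\lambda)$ variables has MGF $(\lambda/(\lambda + s))^n$, so $\mean[e^{-s D_1}] = \mean[(\lambda/(\lambda + s))^{N_1}]$. Substituting the probability generating function of a $\geo(p)$ random variable and simplifying collapses the expression to $p \lambda/(p \lambda + s)$, the MGF of $\expdist(\lambda p)$, from which uniqueness of MGFs gives the claimed distribution for $D_1$.

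The step that needs the most care—and is the main obstacle—is promoting this from ``$D_1$ has the right marginal'' to ``$D_1, D_2, \ldots$ are mutually independent, each with that law.'' The cleanest argument exploits the product structure of the underlying randomness: the shifted sequences $(G_{N_1 + i})_{i \geq 1}$ and $(B_{N_1 + i})_{i \geq 1}$ have the same joint law as the originals and are independent of $(G_1, \ldots, G_{N_1}, B_1, \ldots, B_{N_1})$, and in particular independent of $D_1$. Applying the single-gap computation to these shifted sequences yields $D_2$ independent of $D_1$ with the same law; iterating gives iid $D_1, D_2, \ldots$, and Fact~\ref{FCT:converse} concludes the proof.
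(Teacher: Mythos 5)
Your proposal is correct and complete. Note, however, that the paper itself offers no proof of this statement: it is listed among the ``well known facts about Poisson point processes'' and invoked without argument, so there is no in-paper proof to compare against. Your argument is the standard discrete-renewal one and all the pieces check out: the gaps of the thinned process are geometric sums of iid $\expdist(\lambda)$ variables, the Laplace-transform computation
\[
\mean\left[\left(\tfrac{\lambda}{\lambda+s}\right)^{N_1}\right]
 = \sum_{n\ge 1}(1-p)^{n-1}p\left(\tfrac{\lambda}{\lambda+s}\right)^{n}
 = \frac{p\lambda}{p\lambda+s}
\]
correctly identifies the marginal as $\expdist(\lambda p)$, and the regeneration argument (the post-$N_1$ sequence of gap--mark pairs is an independent copy of the original, since $N_1$ is an a.s.\ finite stopping time for the iid sequence $(G_i,B_i)$) is exactly the right way to upgrade the marginal statement to mutual independence of $D_1,D_2,\ldots$. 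You correctly flag that independence of the marks from the gaps must be read into the statement. One cosmetic point: the final step does not really need ``Fact~\ref{FCT:converse} in the reverse direction''---once the gaps are shown to be iid $\expdist(\lambda p)$, the paper's own Definition of a Poisson point process (partial sums of iid exponentials) directly certifies $P'$ as a rate-$\lambda p$ process. You might also note the degenerate case $p=0$, where $N_1=\infty$ and $P'$ is empty, which is excluded implicitly by the use of the process in the algorithms.
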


\begin{fact}
\label{FCT:combine}
Let $P_1$ and $P_2$ be independent Poisson point processes of rate
$\lambda_1$ and $\lambda_2$ over $[0,\infty)$.  Then
$P_1 \cup P_2$ is a Poisson point process of rate $\lambda_1 + \lambda_2$
over $[0,\infty)$.
\end{fact}

\begin{fact}
\label{FCT:mean}
The expected number of points in a Poisson point process of rate
$\lambda$ over $[a,b]$ is Poisson distributed with mean $\lambda(b - a)$.
\end{fact}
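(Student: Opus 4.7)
The plan is to first reduce the $[a,b]$ case to the $[0,b-a]$ case, and then identify the count with the number of partial sums of iid exponentials that land in $[0,b-a]$, which is a standard Poisson calculation.

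For the reduction, I would use the memoryless property (Fact 2 above) together with Fact \ref{FCT:converse}. Write the points of $P$ as $0 < P_1 < P_2 < \cdots$ and let $M = \min\{i : P_i > a\}$ (with the convention that $P_0 = 0$). Conditional on $M = m$ and on $P_{m-1}$, the gap $P_m - P_{m-1}$ is $\expdist(\lambda)$ constrained to exceed $a - P_{m-1}$, so by memorylessness $P_m - a$ is itself $\expdist(\lambda)$, independent of $P_{m-1}$. Then $P_{m+1}-P_m, P_{m+2}-P_{m+1},\ldots$ are iid $\expdist(\lambda)$ by Fact \ref{FCT:converse}, and they are independent of everything up through $P_{m-1}$. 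Hence the shifted point set $\{P_m - a, P_{m+1} - a, \ldots\}$ is a Poisson point process on $[0,\infty)$ of rate $\lambda$, and $|P \cap [a,b]|$ equals the number of its points in $[0,b-a]$. So it suffices to handle $a = 0$.

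With $a=0$, let $E_1,E_2,\ldots$ be iid $\expdist(\lambda)$, set $S_n = E_1 + \cdots + E_n$ (with $S_0 = 0$), and let $N = |P \cap [0,b]| = \max\{n \geq 0 : S_n \leq b\}$. I would compute $\prob(N=n)$ directly by conditioning on $S_n = s$:
\[
\prob(N = n) = \prob(S_n \leq b,\ S_n + E_{n+1} > b) = \int_0^b \frac{\lambda^n s^{n-1}}{(n-1)!}\, e^{-\lambda s}\cdot e^{-\lambda(b-s)}\, ds,
\]
using that $S_n$ has the $\text{Gamma}(n,\lambda)$ density and that $E_{n+1} \sim \expdist(\lambda)$ is independent of $S_n$. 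The exponentials collapse to $e^{-\lambda b}$, and $\int_0^b s^{n-1}\, ds = b^n/n$, yielding
\[
\prob(N = n) = \frac{(\lambda b)^n}{n!}\, e^{-\lambda b},
\]
for $n \geq 1$; the $n = 0$ case is $\prob(E_1 > b) = e^{-\lambda b}$, matching the same formula. This is the Poisson$(\lambda b)$ pmf, and taking expectation gives mean $\lambda(b-a)$ after undoing the shift.

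The only delicate step is the memoryless reduction, since one must argue carefully that conditioning on $M$ and on $P_{m-1}$ does not bias the residual gap $P_m - a$ away from an $\expdist(\lambda)$, and that the subsequent gaps remain iid $\expdist(\lambda)$ and independent of the pre-$a$ configuration. Once that is done, the computation of $\prob(N=n)$ via the gamma density is a routine integral.
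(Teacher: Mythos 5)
Your proof is correct, but note that the paper offers no proof of this statement at all: it is listed among ``several well known facts about Poisson point processes'' and simply cited, so there is nothing to compare against. Your two-step derivation is the standard one and it works with the paper's constructive definition of the process via iid exponential gaps: the reduction of $[a,b]$ to $[0,b-a]$ via the memoryless property (Fact 2) correctly shows that $\{P_M - a, P_{M+1}-a,\ldots\}$ is again a rate-$\lambda$ process, and the computation
\[
\prob(N=n)=\int_0^b \frac{\lambda^n s^{n-1}}{(n-1)!}e^{-\lambda s}\,e^{-\lambda(b-s)}\,ds=\frac{(\lambda b)^n}{n!}e^{-\lambda b}
\]
is right, including the separate $n=0$ case. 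You are also right that the only delicate point is making the conditioning on $M=m$ and $P_{m-1}$ rigorous (the event $\{M=m\}$ involves $P_m$ itself, so one must condition on $E_m > a-P_{m-1}$ and invoke memorylessness, exactly as you describe); boundary events such as $P_i=a$ have probability zero and cause no trouble. One small observation: the fact as stated says the \emph{expected} number of points ``is Poisson distributed,'' which is evidently a slip for ``the number of points is Poisson distributed with mean $\lambda(b-a)$''; you prove the stronger, intended distributional claim, from which the statement about the mean follows.
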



These ideas can be used to build the logistic Bernoulli factory for 
$r/(1+r)$.

\begin{center}
\setcounter{line}{1}
\begin{tabular}{rl}
\toprule
\multicolumn{2}{l}{{\tt Logistic\_Bernoulli\_Factory} \quad {\em Input: } 
  $C$} \\
\midrule 
\gk & $X \leftarrow 0$, draw $A \leftarrow \expdist(1)$ \\
\gk & Draw $T \leftarrow \expdist(C)$ \\
\gk & While $X = 0$ and $T < A$ \\ 
\gk & \hspace*{1em} Draw $B \leftarrow \berndist(p)$ \\
\gk & \hspace*{1em} If $B = 1$ then $X = 1$, else 
                   $T \leftarrow T + \expdist(C)$ \\
\gk & Return $X$ \\
\bottomrule
\end{tabular}
\end{center}

Note that line 4 can be accomplished in constant time (with $\Theta(k)$
preprocessing time) using the Alias
method of~\cite{walker1974}.

\begin{lemma}
\label{LEM:LBFoutput}
The output of {\tt Logistic\_Bernoulli\_Factory} is a Bernoulli with 
mean $r/(1 + r)$.
\end{lemma}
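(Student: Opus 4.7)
The plan is to recognize the algorithm as a thinning construction on a Poisson point process and reduce the output probability to a race between two independent exponentials.

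First, I would reinterpret the scalar variable $T$ as successively exposing the points of a Poisson point process of rate $C$ on $[0,\infty)$. The initial draw $T \leftarrow \expdist(C)$ gives the first point, and by the memoryless property each subsequent increment $T \leftarrow T + \expdist(C)$ appends the next point. By Fact~\ref{FCT:converse}, the sequence $T_1 < T_2 < \cdots$ so generated is exactly a rate-$C$ Poisson point process, independent of $A \sim \expdist(1)$.

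Next I would show that the ``heads'' event at each point is, marginally and independently of the point locations, a Bernoulli with parameter $r/C$. At point $T_j$, the algorithm chooses index $I_j$ with $\prob(I_j = i) = C_i/C$ and then flips a $p_i$-coin, so
\[
\prob(B_j = 1) = \sum_{i=1}^k \frac{C_i}{C}\,p_i = \frac{r}{C}.
\]
Because $I_j$ and the coin flip are fresh randomness at each step, the $B_j$ are i.i.d.~$\berndist(r/C)$ and independent of the Poisson process $\{T_j\}$. By the thinning Fact~\ref{FCT:thinning}, the subset $P' = \{T_j : B_j = 1\}$ is a Poisson point process of rate $C \cdot (r/C) = r$, and by Fact~\ref{FCT:converse} its first point $P'_1$ is distributed as $\expdist(r)$, independent of $A$.

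Finally, I would observe that the algorithm returns $X=1$ precisely when it encounters a heads point before $T$ crosses $A$, i.e.~when $P'_1 \leq A$. (If no thinned point precedes $A$, the loop terminates with $X=0$.) By Fact~1 applied to the independent pair $P'_1 \sim \expdist(r)$ and $A \sim \expdist(1)$,
\[
\prob(X = 1) \;=\; \prob(P'_1 \leq A) \;=\; \frac{r}{r+1},
\]
which is the desired conclusion. The only delicate step is verifying that the interleaving of Bernoulli draws with the lazy exposure of the Poisson process really yields independent thinning indicators; this follows because each new index $I_j$ and coin $B_j$ is drawn fresh and the exponential increments of $T$ are independent of everything that came before.
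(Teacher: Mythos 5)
Your proof is correct and takes essentially the same approach as the paper's: reinterpret the successive values of $T$ as lazily exposing a rate-$C$ Poisson point process, thin it with acceptance probability $r/C$ to get a rate-$r$ process by Fact~\ref{FCT:thinning}, note the first accepted point is $\expdist(r)$ by Fact~\ref{FCT:converse}, and conclude with the exponential race against $A\sim\expdist(1)$. Your closing remark on verifying the independence of the thinning indicators is a reasonable extra bit of care, but the argument matches the paper's.
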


\begin{proof}
Let $T_1,T_2,\ldots$ be the successive values of $T$ taken on in the
algorithm, and $B_1,B_2,\ldots$ the successive values of $B$.  
Since $T_{i+1} - T_i$ is 
$\expdist(C)$, the $\{T_i\}$ form a Poisson point process $P$ of rate $C$.
Let $P'$ be the points $T_i \in P$ with $B_i = 1$.  Then $P'$ is a 
point process with rate $Cp = r$.

Let $T'_1 = \min\{P'\}$.  
The while loop examines the $P'$ process, and returns 1 if 
$T'_1 < A$, and 0 otherwise.  
By Fact~\ref{FCT:converse}, $T'_1 \sim \expdist(r)$, and 
$A \sim \expdist(1)$, so $\prob(T'_1 < A) = r/(1 + r)$ by Fact~\ref{FCT:exp}.
\end{proof}

\begin{lemma}
\label{LEM:LBFruntime}
In one call to 
{\tt Logistic\_Bernoulli\_Factory}, the expected number of coin flips
needed is $C/(1 + r)$.
\end{lemma}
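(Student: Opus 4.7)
The plan is to establish a short self-consistency equation for $\mean[N]$, where $N$ is the number of coin flips in one call to the factory, by conditioning on whether a first flip happens and on its outcome. This avoids ever writing $N$ as an explicit sum and lets the memoryless property do most of the work.

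First I would observe that a first flip occurs iff the initial $T_1\sim\expdist(C)$ satisfies $T_1<A$ with $A\sim\expdist(1)$, which by Fact~1 has probability $C/(C+1)$. Conditional on this event, the first flip comes up heads with probability $\sum_{i=1}^{k}(C_i/C)p_i = r/C$ (the same averaging over $I$ that was used in the proof of Lemma~\ref{LEM:LBFoutput}), in which case the algorithm terminates after exactly one flip.

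Next I would invoke the memoryless property (Fact~2) applied to $A$: conditional on $T_1 < A$, the residual $A-T_1$ is again $\expdist(1)$ and independent of the future $\expdist(C)$ increments added to $T$ and of all future Bernoulli draws. Consequently, conditional on doing a first flip that comes up tails (combined probability $(C/(C+1))(1-r/C)$), the remaining execution is distributionally identical to a fresh invocation of the algorithm. This self-similarity yields
\begin{equation*}
\mean[N] \;=\; \frac{C}{C+1}\left(1 + \left(1-\frac{r}{C}\right)\mean[N]\right),
\end{equation*}
which I would then solve in one line: the coefficient of $\mean[N]$ on the right is $(C-r)/(C+1)$, so $((1+r)/(C+1))\,\mean[N] = C/(C+1)$, giving $\mean[N] = C/(1+r)$.

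No step should present serious difficulty; the only thing to be careful about is the conditional-independence claim needed to apply the memoryless property, namely that $A-T_1$ (on the event $T_1<A$) is independent of all subsequent exponential increments and Bernoulli draws used by the algorithm. This follows from the a priori independence of $A$ from every other source of randomness in the algorithm together with the exact-in-distribution form of Fact~2. If preferred, the same conclusion could be reached by directly summing $\mean[N]=\sum_{j\ge 1}\prob(N\ge j)$, noting that $\prob(N\ge j) = (C/(C+1))^{j}(1-r/C)^{j-1}$ and evaluating the resulting geometric series, but the conditioning argument is shorter.
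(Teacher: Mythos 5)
Your proof is correct, and it takes a genuinely different route from the paper's. You set up a renewal (self-consistency) equation $\mean[N]=\tfrac{C}{C+1}\bigl(1+(1-\tfrac{r}{C})\mean[N]\bigr)$ by conditioning on the first comparison $T_1$ vs.\ $A$ and the first coin outcome, and you justify the ``fresh start'' after a tails flip via the memoryless property of $A$. The paper instead reframes the algorithm as inspecting the superposition of a rate-$C$ Poisson point process (the successive $T$-values) with a rate-$1$ process (whose first arrival is $A$): each inspected point belongs to the rate-$C$ stream with probability $C/(C+1)$, a coin is flipped there and terminates with probability $r/C$, so the number of inspected points is geometric with success probability $(1+r)/(1+C)$, giving mean $(1+C)/(1+r)$; subtracting the $1/(1+r)$ chance that the terminating point was the rate-$1$ arrival (and hence not a coin flip) yields $C/(1+r)$. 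Both arguments ultimately rest on the same memoryless structure, but the paper packages it as PPP superposition/thinning plus a geometric count, which matches the PPP language used throughout that section, whereas yours is a one-line recursion that avoids PPP machinery entirely and is arguably the more elementary derivation. Your fallback computation $\prob(N\ge j)=(C/(C+1))^{j}(1-r/C)^{j-1}$ is also correct (iterating Fact~2 to get $\prob(T_j<A)=(C/(C+1))^j$), and it sits roughly halfway between the two arguments. One small point worth making explicit in your main argument, which you do flag: the residual $A-T_1$ given $T_1<A$ is not only $\expdist(1)$ but also conditionally independent of $T_1$ itself, not merely of the later increments; this is what makes the ``distributionally identical to a fresh invocation'' claim airtight, and it follows from the joint density factoring, not just from Fact~2 as stated.
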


\begin{proof}
The Poisson process of rate $C$ combined with the process of rate 1
forms a Poisson point process of rate $C + 1$.  The chance that any
point of this process is from the thinned rate $Cp$ process combined with the 
rate 1 process is $(Cp + 1)/(C + 1)$.  Therefore, the number of points
generated in the rate $C + 1$ process has a geometric distribution
with mean $(C+1)/(Cp + 1)$.  Each of these points has a $C/(C+1)$ chance
of coming from the rate $C$ process initially, and so requires a coin flip.
Therefore, combining these effects gives an expected number of coin
flips of $[C/(C+1)][(C+1)/(Cp + 1)] = C/(r + 1)$.
\end{proof}

Now suppose that there is a known $M < 1/2$ such that 
$r \leq M.$
Let ${\tt BF}(C)$ denote the 
Bernoulli Factory
from~\cite{hubertoappearb} that flips a $Cp$ coin using on average 
$9.5 C(1 - M)^{-1}$ flips of the original coin.
Consider the following algorithm.  
\begin{center}
\setcounter{line}{1}
\begin{tabular}{rl}
\toprule
\multicolumn{2}{l}{{\tt Small\_r\_1D\_Bernoulli\_Factory} \quad {\em Input: } 
  $C,M$} \\
\midrule 
\gk & $\beta \leftarrow 1/(1-2M)$ \\
\gk & Draw $Y \leftarrow {\tt Logistic\_Bernoulli\_Factory}(\beta C)$ \\
\gk & Draw $B \leftarrow \berndist(1/\beta)$ \\
\gk & If $Y = 0$, then $X \leftarrow 0$ \\
\gk & Elseif $Y = 1$ and $B = 1$, then $X \leftarrow 1$ \\
\gk & Else $X \leftarrow {\tt BF}(\beta C/(\beta - 1))$ \\
\bottomrule
\end{tabular}
\end{center}

\begin{lemma}
\label{LEM:Smallruntime}
Algorithm {\tt Small\_r\_1D\_Bernoulli\_Factory} produces a Bernoulli
distributed output with mean $Cp \leq M < 1/2$, 
and requires at most (on average)
\[
\frac{C}{(1 - 2M)(1 + Cp)}  + Cp\cdot \left[19C 
  \frac{1}{1 - 2M + Cp}\right]
\]
coin flips to do so.
\end{lemma}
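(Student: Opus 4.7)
The plan is to decompose the analysis into correctness and expected running time, both obtained by conditioning on the pair $(Y,B)$ drawn in lines 2--3 of the algorithm.

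For correctness, I would first apply Lemma~\ref{LEM:LBFoutput} to $\mathtt{Logistic\_Bernoulli\_Factory}(\beta C)$ to get $\prob(Y=1) = \beta Cp/(1+\beta Cp)$, which after substituting $\beta = 1/(1-2M)$ simplifies to $q := Cp/(1-2M+Cp)$. I would then verify that $\mathtt{BF}$ is called in its valid range: line 6 invokes $\mathtt{BF}(C\beta/(\beta-1)) = \mathtt{BF}(C/(2M))$, whose output mean is $Cp/(2M)$, and the hypothesis $Cp \leq M$ gives $Cp/(2M) \leq 1/2$, so the call is legitimate with $\epsilon = 1/2$ in the $\mathtt{BF}$ guarantee of~\cite{hubertoappearb}. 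Combining the three disjoint cases $\{Y=0\}$, $\{Y=1,B=1\}$, and $\{Y=1,B=0\}$, and using $1-1/\beta = 2M$,
\[
\prob(X=1) = q\cdot\frac{1}{\beta} + q\Bigl(1-\frac{1}{\beta}\Bigr)\cdot\frac{Cp}{2M} = q\bigl((1-2M)+Cp\bigr) = Cp,
\]
which is the target mean.

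For the expected running time I would account for the two sources of coin flips separately. Line 2's call to $\mathtt{Logistic\_Bernoulli\_Factory}(\beta C)$ uses on average $\beta C/(1+\beta Cp) = C/(1-2M+Cp)$ flips by Lemma~\ref{LEM:LBFruntime}; since $(1-2M)(1+Cp) = (1-2M+Cp)-2MCp \leq 1-2M+Cp$, this is bounded above by the first term $C/[(1-2M)(1+Cp)]$ of the claim. Line 6 is executed only on $\{Y=1,B=0\}$, an event of probability $q(1-1/\beta) = 2MCp/(1-2M+Cp)$, and conditional on reaching it the $\mathtt{BF}(C/(2M))$ call uses on average at most $9.5(C/(2M))/(1/2) = 9.5 C/M$ flips. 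Their product is $Cp\cdot 19C/(1-2M+Cp)$, which is the second term; summing the two contributions gives the stated bound.

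The only place I expect any real friction is making sure the three rescalings line up correctly: $\beta$ feeds the logistic factory, $1/\beta$ governs $B$, and $1/(2M)$ is baked into the $\mathtt{BF}$ call. The key identity that closes both calculations is $1-1/\beta = 2M$, which causes the $2M$ in line 6's probability to cancel the $1/(2M)$ inside $\mathtt{BF}$ (yielding the clean $19$ in the running time) and forces the $(1-2M)+Cp$ factor to match the denominator of $q$ (yielding $Cp$ in correctness). Everything else is routine simplification.
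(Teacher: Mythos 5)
Your proposal is correct and follows essentially the same route as the paper: condition on $(Y,B)$, use Lemma~\ref{LEM:LBFoutput}/\ref{LEM:LBFruntime} for the logistic factory, note $Cp\beta/(\beta-1)=Cp/(2M)\leq 1/2$ so the {\tt BF} guarantee applies with the factor $19\,C/(2M)$, and combine via $1-1/\beta=2M$. Your treatment of the first term is in fact slightly more careful than the paper's (you compute the exact $\beta C/(1+\beta Cp)$ and then bound it by $C/[(1-2M)(1+Cp)]$, where the paper states the looser figure directly), but the argument is the same.
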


Note that for small $p$ and $M$, this running time is to first order
just $C$.

\begin{proof}
First show correctness.  Let $A_1$ be the event that $Y = 1$ and $B = 1$ in
the algorithm (in which case line 5 sets $X$ to be 1), 
and $A_2$ be the event that $Y = 1$, $B = 0$, and a call to 
{\tt BF$(C\beta/(\beta - 1))$} returns a 1 (in which case line 7 sets
$X$ to be 1).  These are disjoint events, and the output of the algorithm is
$X = \ind(A_1) + \ind(A_2)$.  Therefore,
\[
\prob(X = 1) = \prob(A_1) + \prob(A_2).
\]

The value of $Y$ is the call to {\tt Logistic\_Bernoulli\_Factory}$(\beta C)$,
and so $\prob(Y = 1) = \beta Cp/(1+\beta Cp)$.  For the Bernoulli $B$, 
$\prob(B = 1) = 1/\beta$.
Therefore $\prob(A) = \prob(Y=1)\prob(B=1) = Cp/(1+\beta Cp).$

The output of {\tt BF}$(C\beta/(\beta - 1))$ is 1 with probability 
equal to $\beta C p$, so
\[
\prob(A_2) = \frac{\beta C p}{1 + \beta Cp}(1 - 1/\beta)
  \frac{\beta C p}{\beta - 1}
 = Cp \frac{\beta Cp}{1 + \beta Cp}
\]
Therefore
\[
\prob(X = 1) = Cp \frac{1}{1 + \beta Cp} 
  + Cp \frac{\beta Cp}{1 + \beta Cp}
  = Cp
\]
as desired.

Now for the running time.  Lemma~\ref{LEM:LBFruntime} gives a 
running time of $\beta C/(1 + Cp)$ for line 1.  Line 5 is executed with
probability $(\beta - 1)Cp/(1+\beta Cp)$.  By the way $\beta$ was
chosen, $Cp\beta/(\beta - 1) \leq 1/2$.  Therefore, 
Theorem 1.1 of~\cite{hubertoappearb} gives that the call to 
${\tt BF}(C\beta/(\beta - 1))$ requires at most
$19 [C\beta/(\beta - 1)]$ flips.  Therefore, the total number of flips is
on average at most
\[
\frac{C}{(1 - 2M)(1 + Cp)} + Cp\cdot \left[\frac{19C}{1 - 2M + Cp}\right].
\]
\end{proof}

This is close to Theorem~\ref{THM:main}, but the constant of 19 in
the second term is larger.  To improve this algorithm, and eliminate
the need for the call to the old {\tt BF} algorithm, it is necessary
to consider what happens for larger $r$.

\section{Large $r$ algorithm}
\label{SEC:large}

In this section, the algorithm of the previous section is improved
to allow for all $r \in [0,1-\epsilon]$, where $\epsilon$ is arbitrarily
close to 0.  Along the way, the older $9.5C\epsilon^{-1}$ algorithm 
of~\cite{hubertoappearb} is improved to a $7.5C\epsilon^{-1}$ algorithm.

The first step is to build a random coin flip whose mean is slightly
larger than $r$.  If this coin is tails, return tails for $r$.  If the 
coin returns heads, heads will be returned with probability close to 1.
Otherwise, a new coin will need to be flipped.

\subsection{A coin flip with mean slightly larger than $r$}

Consider an asymmetric random walk on the 
integers $\Omega = \{0,1,\ldots,m\}$, where given the current state $X_t$, the 
next state is either $\max\{0,X_t - 1\}$, or $\min\{X_t + 1,m\}$.
The transition probabilities are 
\[
\prob(X_{t+1} = \min\{i + 1,m\}|X_t = i) = p_r, \ 
\prob(X_{t+1} = \max\{i - 1,0\}|X_t = i) = q_r,
\] 
where $p_r + q_r = 1$.
This is 
also called the {\em Gambler's Ruin} walk.

The following facts about this well known process will be helpful.

\begin{fact}
\label{FCT:hitting}
Suppose $p_r \neq q_r$ 
and $T = \inf\{t:X_t \in \{0,m\}\}$.  Then
\begin{align}
\prob(X_T = m) &= \frac{1 - (q_r/p_r)^{X_0}}{1 - (q_r/p_r)^m} \\
\mean[T] &= \frac{X_0}{q_r - p_r} - \frac{m}{q_r - p_r}\cdot \prob(X_T = m).
 \label{EQN:gamblersruinruntime}
\end{align}
\end{fact}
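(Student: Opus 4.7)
The plan is to use first-step analysis for both formulas, producing linear recurrences on the state space $\{0,1,\ldots,m\}$ and solving them explicitly. For the hitting probability, define $h(i) = \prob(X_T = m \mid X_0 = i)$. Conditioning on the first step gives the recurrence $h(i) = p_r h(i+1) + q_r h(i-1)$ for $0 < i < m$, with boundary values $h(0) = 0$ and $h(m) = 1$. This is a homogeneous second-order linear difference equation with characteristic roots $1$ and $\rho \defeq q_r/p_r$; since $p_r \neq q_r$, these roots are distinct and the general solution is $h(i) = A + B\rho^i$. Imposing the two boundary conditions gives $A = 1/(1 - \rho^m)$ and $B = -A$, yielding the stated formula $h(X_0) = (1 - \rho^{X_0})/(1 - \rho^m)$.

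For the expected hitting time, set $\tau(i) = \mean[T \mid X_0 = i]$. First-step analysis now produces the inhomogeneous recurrence $\tau(i) = 1 + p_r \tau(i+1) + q_r \tau(i-1)$ for $0 < i < m$, with $\tau(0) = \tau(m) = 0$. The general solution is the sum of the homogeneous solution $A + B\rho^i$ and any particular solution. The natural guess of a linear particular solution $\tau_p(i) = ci$ works: substitution gives $c(p_r - q_r) = -1$, so $c = 1/(q_r - p_r)$. Applying the two boundary conditions to $\tau(i) = A + B\rho^i + i/(q_r - p_r)$ determines $A$ and $B$, and after simplification the resulting expression collapses to
\[
\tau(X_0) = \frac{X_0}{q_r - p_r} - \frac{m}{q_r - p_r}\cdot \frac{1 - \rho^{X_0}}{1 - \rho^m},
\]
which matches the claim once the second factor is recognized as $\prob(X_T = m)$ from the first part.

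Two small points that require minor care. First, one must justify that $\mean[T] < \infty$ before conditioning on the first step, since otherwise the recurrence is formal; a standard comparison with a geometric random variable counting the number of attempts to move $m$ consecutive steps in the same direction suffices, and in fact gives exponential tails for $T$. Second, the hypothesis $p_r \neq q_r$ is precisely what makes $\rho \neq 1$, so the two characteristic roots are distinct and the particular linear solution is not absorbed into the homogeneous part; without this hypothesis the formulas as written are indeterminate. I do not expect any serious obstacle here — this is the textbook Gambler's Ruin computation — and the main care is only in verifying that the boundary conditions collapse the expression for $\tau$ to the advertised form involving $\prob(X_T = m)$.
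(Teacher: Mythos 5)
Your derivation is correct: it is the standard first-step (one-step conditioning) analysis for Gambler's Ruin, solving the homogeneous second-order recurrence $p_r h(i+1) - h(i) + q_r h(i-1) = 0$ with roots $1$ and $\rho = q_r/p_r$ for the hitting probability, and the corresponding inhomogeneous recurrence with the linear particular solution $i/(q_r - p_r)$ for the expected hitting time. The paper itself supplies no proof for this statement; it is labelled a \emph{Fact} and introduced with the remark that these are facts ``about this well known process,'' so there is nothing in the source to compare against. Your argument is exactly the textbook route, and your two side remarks are well judged: the finiteness of $\mean[T]$ (needed to make the first-step recurrence rigorous rather than formal) does follow from the geometric-comparison argument you sketch, and the hypothesis $p_r \neq q_r$ is precisely what keeps the characteristic roots distinct and prevents the particular solution from colliding with the homogeneous one. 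One further small observation worth recording is that although the chain as defined in the paper has reflecting behavior at $0$ and $m$, this is irrelevant to the result because $T$ is the first hitting time of $\{0,m\}$, so the walk behaves as a free nearest-neighbor walk on $\{0,\ldots,m\}$ up to time $T$.
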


\begin{fact}
Suppose $X_0 = m$, $p_r < q_r$, and $T = \inf\{t:X_t = 0\}$.
Then $\mean[T] \leq m/(q_r - p_r)$.
\end{fact}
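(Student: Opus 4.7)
The plan is to exploit the negative drift $p_r - q_r < 0$ via a supermartingale argument and optional stopping. Define
\[
M_t = X_t + (q_r - p_r)\,t,
\]
and aim to show that the stopped process $M_{t\wedge T}$ is a supermartingale. Starting from $X_0 = m$, the walk must remain in $\{1,\ldots,m\}$ until time $T$, so I only need to check the drift at interior states $1 \leq i < m$ and at the reflecting top boundary $i = m$. For interior states a direct computation gives $\mean[X_{t+1} - X_t \mid X_t = i] = p_r - q_r$, so $M_t$ is a genuine martingale on that event. At $i = m$ the upward step is suppressed, yielding drift $-q_r$; since $-q_r - (p_r - q_r) = -p_r < 0$, the drift is \emph{more} negative at the boundary, so the supermartingale property is preserved (and in fact strengthened) there.

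With the supermartingale property established, the main task is to apply optional stopping carefully. Since I do not yet know that $\mean[T] < \infty$, I would work with the bounded stopping time $T \wedge n$, for which the optional stopping theorem applies without hypotheses. This yields
\[
m = \mean[M_0] \ \geq\ \mean[M_{T\wedge n}] \ =\ \mean[X_{T\wedge n}] + (q_r - p_r)\,\mean[T\wedge n] \ \geq\ (q_r - p_r)\,\mean[T\wedge n],
\]
using $X_{T\wedge n} \geq 0$ in the final inequality. Rearranging gives $\mean[T\wedge n] \leq m/(q_r - p_r)$ uniformly in $n$. Letting $n \to \infty$ by monotone convergence produces both $\mean[T] \leq m/(q_r - p_r)$ and, a posteriori, the finiteness of $T$.

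The step that I expect to require the most care is the boundary-drift check at $X_t = m$: it is tempting to assume that reflection could ruin the supermartingale property, but in fact reflection at the \emph{top} boundary only kills upward moves, so the drift there is at least as negative as in the interior. Once that sign check is made, the rest of the argument is a standard truncation-plus-monotone-convergence application of optional stopping, and the bound $m/(q_r - p_r)$ falls out directly from the martingale decomposition without needing the closed form of Fact~\ref{FCT:hitting}.
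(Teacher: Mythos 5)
Your argument is correct. One thing to note at the outset: the paper states this as a bare ``Fact'' about the gambler's ruin walk and supplies no proof at all, so there is no in-paper argument to match against; the nearest material is the exact absorption formula of Fact~\ref{FCT:hitting} (equation~\eqref{EQN:gamblersruinruntime}), from which one would otherwise derive the bound by a separate analysis of the walk reflected at $m$. Your supermartingale route is a clean, self-contained substitute. The drift computation is right: at interior states $1\le i<m$ the increment of $X$ has mean $p_r-q_r$, so $M_t = X_t+(q_r-p_r)t$ is a martingale there, while at the reflecting barrier $i=m$ the suppressed up-step gives mean increment $-q_r \le p_r-q_r$, which is precisely what converts the identity into the inequality $\mean[T]\le m/(q_r-p_r)$ (and explains why reflection at the top can only speed up absorption at $0$, consistent with the free-walk value $m/(q_r-p_r)$ being the extreme case). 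Your handling of integrability is also sound: optional stopping applied at the bounded time $T\wedge n$ needs no moment hypotheses, $X_{T\wedge n}\ge 0$ discards the position term, and monotone convergence passes the uniform bound on $\mean[T\wedge n]$ to $\mean[T]$, giving finiteness of $T$ as a byproduct rather than assuming it. What your approach buys over invoking the closed-form hitting-time formulas is exactly this: no exact solution of the reflected chain is needed, and the inequality's source (the boundary drift) is made explicit.
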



Consider the following Bernoulli factory that begins a Gambler's Ruin
walk starting at state 1, and returns heads if the state reaches 0 before
it reaches $m$.
\begin{center}
\setcounter{line}{1}
\begin{tabular}{rl}
\toprule
\multicolumn{2}{l}{{\tt A} \quad {\em Input: } 
  $m,C$} \\
\midrule 
\gk & $s \leftarrow 1$ \\
\gk & While $s \in \{1,2,\ldots,m-1\}$ \\
\gk & \hspace*{1em} 
  $B \leftarrow {\tt Logistic\_Bernoulli\_Factory}(C)$ \\ 
\gk & \hspace*{1em} $s \leftarrow s - 2B + 1$ \\
\gk & Return $\ind(s = 0)$ \\
\bottomrule
\end{tabular}
\end{center}

\begin{lemma}
\label{LEM:Acorrect}
The output of {\tt A} is a Bernoulli with mean 
$r(1 - r^{m-1})/(1 - r^m)$.  
\end{lemma}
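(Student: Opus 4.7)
The plan is to recognize algorithm {\tt A} as a direct simulation of the Gambler's Ruin walk from Fact~\ref{FCT:hitting}, with carefully identified up/down probabilities, and then apply that fact.

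First I would observe that if we let $I_0, I_1, \ldots$ denote the successive values of $I$ in the algorithm (with $I_0 = 1$), then by Lemma~\ref{LEM:LBFoutput} each call to {\tt Logistic\_Bernoulli\_Factory} returns $B = 1$ with probability $r/(1+r)$ and $B = 0$ with probability $1/(1+r)$, independently of everything previous. The update $I \leftarrow I + 1 - 2B$ means $I$ moves to $I + 1$ when $B = 0$ (probability $1/(1+r)$) and to $I - 1$ when $B = 1$ (probability $r/(1+r)$). Since the process is stopped upon hitting $\{0, m\}$, the values $I_0, I_1, \ldots$ form a Gambler's Ruin walk on $\{0, 1, \ldots, m\}$ with starting point $X_0 = 1$, up-probability $p_r = 1/(1+r)$, and down-probability $q_r = r/(1+r)$, so that $q_r/p_r = r$.

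Next, since the output is $\ind(I = 0)$, I want $\prob(X_T = 0)$ where $T = \inf\{t : X_t \in \{0,m\}\}$. Provided $r \neq 1$ (so that $p_r \neq q_r$), Fact~\ref{FCT:hitting} gives
\[
\prob(X_T = m) = \frac{1 - (q_r/p_r)^{X_0}}{1 - (q_r/p_r)^m} = \frac{1 - r}{1 - r^m}.
\]
Therefore
\[
\prob(X_T = 0) = 1 - \frac{1-r}{1-r^m} = \frac{r - r^m}{1 - r^m} = \frac{r(1 - r^{m-1})}{1 - r^m},
\]
which is the claimed mean. (The edge case $r = 1$ can be handled by continuity, or by noting directly that a symmetric walk from $1$ on $\{0,\ldots,m\}$ hits $0$ before $m$ with probability $(m-1)/m$, matching the limiting value.)

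There is really no main obstacle here beyond bookkeeping: the only thing to be careful about is getting the direction of the walk right, so that $B = 1$ (a ``success'' of the logistic factory) corresponds to a step toward $0$, and the returned event $\{I = 0\}$ corresponds to absorption at the lower boundary. Once the walk is identified, the conclusion is an immediate substitution into Fact~\ref{FCT:hitting}.
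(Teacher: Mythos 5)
Your proof is correct and matches the paper's argument essentially exactly: identify the walk on $I$ as a Gambler's Ruin with $p_r = 1/(1+r)$, $q_r = r/(1+r)$ (so $q_r/p_r = r$), start at $1$, and substitute into Fact~\ref{FCT:hitting} to get $\prob(I=0) = r(1-r^{m-1})/(1-r^m)$. The only difference is your explicit handling of the degenerate case $r=1$, which the paper omits (and which never arises since $r \leq 1-\epsilon < 1$).
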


\begin{proof}
{\tt Logistic\_Bernoulli\_Factory} outputs a Bernoulli that has mean
$r/(1 + r)$.  Hence $p_r = 1/(1+r)$, $q_r = r/(1 + r)$, and $q_r/p_r = r$.
From Fact~\ref{FCT:hitting}, in line 5 that makes
$\prob(I = 0) = 1 - (1 - r)/(1 - r^m) = (r - r^m)/(1 - r^m) = 
 r(1 - r^{m-1})/(1 - r^m)$.  
\end{proof}

\begin{lemma}
\label{LEM:Aruntime}
The expected number of coin flips used by {\tt A}
is at most $C(m - 1)$.
\end{lemma}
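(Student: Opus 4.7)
The plan is to apply Wald's identity twice. Each iteration of the while loop invokes {\tt Logistic\_Bernoulli\_Factory} exactly once and these calls are independent, so by Lemma~\ref{LEM:LBFruntime} the per-iteration flip count is i.i.d.\ with mean $C/(1+r)$. Let $T$ denote the number of iterations, equivalently the hitting time of $\{0,m\}$ by the Gambler's Ruin walk on $\{0,\ldots,m\}$ with $p_r = 1/(1+r)$, $q_r = r/(1+r)$, started from $X_0 = 1$. Because $T$ is a stopping time relative to the outputs of the successive {\tt Logistic\_Bernoulli\_Factory} calls, Wald's identity gives
\[
\mean[\text{total flips}] \;=\; \mean[T]\cdot\frac{C}{1+r},
\]
so the lemma reduces to proving $\mean[T] \leq (m-1)(1+r)$.

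Next, I would compute $\mean[T]$ by applying Wald to the walk itself: $\mean[X_T] - 1 = (p_r - q_r)\mean[T]$, where $p_r - q_r = (1-r)/(1+r)$. Lemma~\ref{LEM:Acorrect} already supplies $\prob(X_T = 0) = r(1-r^{m-1})/(1-r^m)$, so $\mean[X_T] = m(1-r)/(1-r^m)$. Substituting and rearranging, the desired inequality $\mean[T] \leq (m-1)(1+r)$ is equivalent to $\mean[X_T] \leq m - (m-1)r$, i.e.\
\[
m(1-r) \;\leq\; (1-r^m)\bigl(m - (m-1)r\bigr).
\]

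The main obstacle is verifying this algebraic inequality. Using the decompositions $m - (m-1)r = m(1-r) + r$ and $1 - r^m = (1-r)(1 + r + \cdots + r^{m-1})$, a short cancellation (one factor of $r$ and one of $1-r$) reduces it to
\[
m\,r^{m-1} \;\leq\; 1 + r + r^2 + \cdots + r^{m-1},
\]
which holds term-by-term on $[0,1]$ since $r^i \geq r^{m-1}$ for every $i \in \{0,1,\ldots,m-1\}$. Once this simplification is spotted the result follows immediately; everything else is a routine Wald computation that leans directly on Lemmas~\ref{LEM:LBFruntime} and~\ref{LEM:Acorrect} already established in the paper.
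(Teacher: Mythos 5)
Your proof is correct and takes essentially the same route as the paper: your two uses of Wald's identity reproduce exactly the hitting-time formula of Fact~\ref{FCT:hitting} (equation~\eqref{EQN:gamblersruinruntime}), which the paper cites directly, together with the paper's final step of multiplying $\mean[T]\leq (m-1)(1+r)$ by the $C/(1+r)$ expected flips per call. Your reduction to $m r^{m-1}\leq 1+r+\cdots+r^{m-1}$ is an equivalent rearrangement of the paper's inequality $f(r)\leq m-1$, and both are settled by the same termwise comparison of a geometric sum.
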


\begin{proof}
As in the last proof $p_r = 1/(1+r)$ and $q_r = r/(1+r)$.  So
\[
q_r - p_r = \frac{r}{1 + r} - \frac{1}{1 + r} = -\frac{1 - r}{1 + r},
\]
and $(q_r/p_r) = r$.
Using~\eqref{EQN:gamblersruinruntime}, 
\begin{align*}
\mean[T] &= \frac{1 + r}{1 - r} \cdot \left[m \frac{1 - r}{1 - r^m} - 1 \right]
 = (1+r)\left[\frac{m}{1 - r^m} - \frac{1}{1 - r}\right].
\end{align*}

Let $f(r) = m/(1 - r^m) - 1/(1 - r).$  Then it holds that
$f(r) < m - 1$ for all $r \in (0,1)$ and $m \geq 1$.   
Note that for all $r \in (0,1)$:
\begin{align*}
f(r) < m - 1 &\Leftrightarrow m(1 - r) - \frac{(1 - r^m)}{1-r} < 
   (m - 1)(1 - r^m) \\
 &\Leftrightarrow m - mr - 1 - r - \cdots - r^{m-1} < 
  m - 1 - r^m(m - 1) \\
 &\Leftrightarrow (m - 1)r^m < r + r^2 + \cdots + r^{m-1} + mr \\
 &\Leftrightarrow m - 1 < r^{1-m} + r^{2-m} + \cdots + r^{-1} + mr^{1-m}.
\end{align*}
Since $r \in (0,1)$, $r^{i-m} \geq 1$ for all $i \in \{1,\ldots,m-1\}$,
so the right hand side is strictly greater than the left hand side.
Note $f(0) = m - 1$, so for $r \in [0,1 - \epsilon]$, the 
function is at most $m - 1$.  

Each call to line 3 requires on average
$C/(1 + r)$ time by Lemma~\ref{LEM:LBFruntime}.  Together, 
the overall number of 
steps (on average) is at most
\[
(1 + r)(m - 1) \frac{C}{1 + r} = C(m - 1).
\]
\end{proof}

\subsection{After the flip}

Here is how {\tt A} can be useful.  Using $A$, it is possible to generate
a Bernoulli random variable that is 1 with probability 
\[
p_\beta = \beta r \frac{1 - (\beta r)^{m-1}}{1 - (\beta r)^{m}}
\]
for any constant $\beta > 1$.  By choosing $\beta$ large enough,
$p_\beta \geq r$.  Note that $p_\beta / \beta \leq r$.  So
$r \in [p_\beta/\beta,p_\beta]$.

So the algorithm works as follows.  First flip a $p_\beta$-coin.
If it is tails, then return tails for the $r$-coin as well.  If it
is heads, then flip a $(1/\beta)$-coin.  If that is heads as well, 
return heads for the $r$-coin.  Otherwise, flip a $p'$-coin, and return
the same value for the $r$-coin.

For this algorithm to work, $p'$ must satisfy:
\[
r = \frac{p_\beta}{\beta} + p' p_\beta(1 - (1/\beta)).
\]
Solving for $p'$ gives
\begin{align*}
p' &= \frac{1}{\beta - 1} \left[
  \frac{(\beta r)^{m-1}}
       {1 + (\beta r)^1 + \cdots + (\beta r)^{m-2}}
 \right],
\end{align*}
so the next step of the algorithm is figuring out how to generate
a $p'$-coin.

\subsection{Generating a $p'$-coin}

Fortunately, we do not have to actually generate a $p'$-coin for all
possible values of $\beta$, as 
we are allowed to choose the value of $\beta$ to use, as long as 
$p_\beta \geq r$ for our choice of $\beta$.  Let
\[
\beta = 1 + \frac{1}{m-1},
\]
so $(\beta - 1)^{-1} = m-1$.  Then
\begin{equation}
\label{EQN:pprime}
p' = \frac{(m - 1)(\beta r)^{m - 1}}{1 + (\beta r) + \cdots + (\beta r)^{m-2}}. 
\end{equation}
Note that $p' \leq 1$ which gives that $p_\beta \geq r$ for this choice of
$\beta$.

The algorithm for generating a $p'$-coin for $p'$ as in~\eqref{EQN:pprime} 
will be called {\tt B} here, and 
is shown graphically in 
Figure~\ref{FIG:pprime}.  Notice that if the first flip is heads and the 
second flip is tails, then our problem has changed to the same problem, but
with $m$ reduced to $m - 1$.

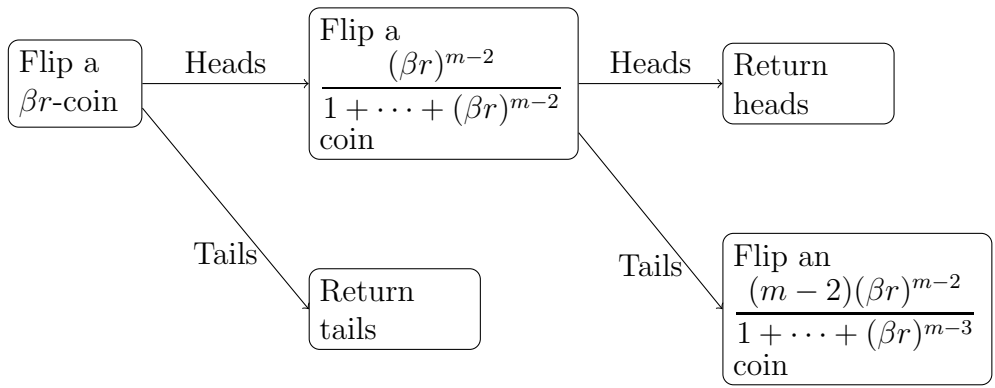
\begin{figure}[h!]
\begin{center}
\begin{tikzpicture}
\draw (0,0) node[text width=1.5cm,anchor=west,rectangle,draw,rounded corners] 
  (A) {Flip a ${\beta r}$-coin};
\draw (4,0) node[text width=3.3cm,anchor=west,rectangle,draw,rounded corners] 
  (B) {Flip a 
$\dfrac{(\beta r)^{m-2}}{1 + \cdots + (\beta r)^{m-2}}$ coin};
\draw (4,-3) node[text width=2cm,anchor=west,rectangle,draw,rounded corners] 
  (C) {Return tails};
\draw (9.5,0) node[text width=2cm,anchor=west,rectangle,draw,rounded corners] 
  (D) {Return heads};
\draw (9.5,-3) 
  node[text width=3.3cm,anchor=west,rectangle,draw,rounded corners] 
  (E) {Flip an 
  $\dfrac{(m -2)(\beta r)^{m-2}}{1 + \cdots + (\beta r)^{m - 3}}$ coin};
\draw[->] (A) -- node[above] {Heads} (B);
\draw[->] (A.340) -- node[below=0.3cm] {Tails} (C.180);
\draw[->] (B) -- node[above] {Heads} (D.180);
\draw[->] (B.340) -- node[below=0.3cm] {Tails} (E.180);
\end{tikzpicture}
\end{center}
\caption{A graphical illustration of Algorithm {\tt B}.}
\label{FIG:pprime}
\end{figure}

To utilize this procedure, it is necessary to be able to generate
a $(\beta r)^{m-2}/(1 + \cdots + (\beta r)^{m-2}$ coin.  Fortunately,
this can be accomplished fairly quickly using the Gambler's ruin chain
from earlier.

\begin{center}
\setcounter{line}{1}
\begin{tabular}{rl}
\toprule
\multicolumn{2}{l}{{\tt High\_Power\_Logistic\_BF}} \\
\multicolumn{2}{l}{{{\em Input: } 
  $m,\beta,C$} {\em Output: } 
  $X \sim \berndist((\beta r)^{m}/(1 + \cdots + (\beta r)^m))$} \\
\midrule 
\gk & $s \leftarrow 1$ \\ 
\gk & While $s \in \{1,\ldots,m\}$ \\
\gk & \hspace*{1em} Draw $B \leftarrow 
  {\tt Logistic\_Bernoulli\_Factory}(\beta C)$ \\
\gk & \hspace*{1em} $s \leftarrow s + 2B - 1$ \\
\gk & $X \leftarrow \ind(s = m + 1)$ \\
\bottomrule
\end{tabular}
\end{center}

\begin{lemma}
\label{LEM:HPLBFruntime}
The output of {\tt High\_Power\_Logistic\_BF} has
distribution $\berndist((\beta r)^m/(1 + \cdots + (\beta r)^m))$.  The
expected number of coin flips used is at most $\beta C/(1 - \beta r)$.
\end{lemma}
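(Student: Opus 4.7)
The plan is to recognize that {\tt High\_Power\_Logistic\_BF} is another disguised Gambler's Ruin walk, but this time the ``per-step'' coin is itself built from a Logistic Bernoulli Factory at inflated rate $\beta C$. By Lemma~\ref{LEM:LBFoutput}, the call on line 3 returns $B \sim \berndist(\beta r/(1+\beta r))$. Hence the state $s$ performs an asymmetric random walk on $\{0,1,\ldots,m+1\}$ started at $X_0 = 1$, absorbed at $0$ or $m+1$, with up-probability $p_r = \beta r/(1+\beta r)$ and down-probability $q_r = 1/(1+\beta r)$. The key ratio is $q_r/p_r = 1/(\beta r)$, and the upper barrier is $m+1$, so that $X$ equals $\ind(X_T = m+1)$.

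For correctness, I apply Fact~\ref{FCT:hitting} with $X_0 = 1$ and upper barrier $m+1$:
\[
\prob(X_T = m+1) = \frac{1 - 1/(\beta r)}{1 - 1/(\beta r)^{m+1}}
= \frac{(\beta r)^{m+1} - (\beta r)^m}{(\beta r)^{m+1} - 1}.
\]
Then I factor the denominator via the geometric-series identity $(\beta r)^{m+1} - 1 = ((\beta r) - 1)(1 + (\beta r) + \cdots + (\beta r)^m)$ and cancel the common factor $(\beta r) - 1$ with the numerator's $(\beta r)^m((\beta r) - 1)$. What falls out is exactly $(\beta r)^m / (1 + (\beta r) + \cdots + (\beta r)^m)$, matching the claim.

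For the running time, let $T$ be the total number of iterations of the while loop. Fact~\ref{FCT:hitting} gives
\[
\mean[T] = \frac{1}{q_r - p_r} - \frac{m+1}{q_r - p_r}\cdot \prob(X_T = m+1),
\]
and since $q_r - p_r = (1 - \beta r)/(1 + \beta r)$, this simplifies to $\mean[T] = \frac{1+\beta r}{1-\beta r}\bigl[1 - (m+1)\cdot \prob(X_T = m+1)\bigr]$. Each iteration calls {\tt Logistic\_Bernoulli\_Factory}$(\beta C_1,\ldots,\beta C_k)$, which by Lemma~\ref{LEM:LBFruntime} uses on average $\beta C/(1+\beta r)$ coin flips. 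By Wald's identity (the termination time $T$ depends only on past $B$ values, so the per-step cost is independent of whether the loop continues), the expected total number of coin flips is
\[
\mean[T]\cdot \frac{\beta C}{1+\beta r} = \frac{\beta C}{1-\beta r}\bigl[1 - (m+1)\prob(X_T = m+1)\bigr] \leq \frac{\beta C}{1 - \beta r},
\]
since the bracketed term lies in $[0,1]$.

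The only slightly delicate step will be the geometric-series cancellation in the correctness portion; everything else is bookkeeping that invokes the facts and lemmas already in hand. I should also be a bit careful to justify the Wald-style multiplication of expectations in the runtime bound, noting that the coin-flip count for each Logistic call is independent of the walk's continuation decision at that step.
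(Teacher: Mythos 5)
Your proposal is correct and follows essentially the same route as the paper: identify the loop as a Gambler's Ruin walk with up-probability $\beta r/(1+\beta r)$, apply Fact~\ref{FCT:hitting} with start $1$ and barrier $m+1$ to get the hitting probability $(\beta r)^m/(1 + \cdots + (\beta r)^m)$ and the bound $\mean[T] \leq (1+\beta r)/(1-\beta r)$, then multiply by the per-call cost $\beta C/(1+\beta r)$ from Lemma~\ref{LEM:LBFruntime}. Your treatment is in fact slightly more careful than the paper's (explicit geometric-series cancellation, keeping the $-(m+1)\prob(X_T=m+1)$ term before bounding, and the Wald-identity justification), but the substance is identical.
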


\begin{proof}
This is a Gambler's ruin where $p = \beta r/(1 + \beta r)$ 
and $q = 1/(1 + \beta r)$,
so $q/p = 1/(\beta r)$.  Hence from Fact~\ref{FCT:hitting},
\[
\prob(s = m + 1) = \frac{1 - (1/(\beta r))^1}{1 - (1/(\beta r))^{m+1}} = 
 \frac{(\beta r)^{m}(1 - \beta r)}{1 - (\beta r)^{m+1}} = 
  \frac{(\beta r)^{m}}{1 + \cdots + (\beta r)^{m}}. 
\]

Also from Fact~\ref{FCT:hitting}, note $q - p = (1+\beta r)/(1- \beta r)$, so
if $T$ is the number of times line
3 is called,
\begin{align*}
\mean[T] =
  \frac{1 + \beta r}{1 - \beta r}\left[1 - (m+1)\frac{(\beta r)^m(1-\beta r)}
  {1 - (\beta r)^{m+1}}\right] \leq \frac{1+\beta r}{1 - \beta r}.
\end{align*}

Each call to {\tt Logistic\_Bernoulli\_Factory} takes time
$\beta C/(1 + \beta r)$, so the overall number of coin flips 
(on average) is at most
$\beta C/(1 -\beta r)$. 
\end{proof}

In pseudocode, algorithm {\tt B} looks like this.
\begin{center}
\setcounter{line}{1}
\begin{tabular}{rl}
\toprule
\multicolumn{2}{l}{{\tt B} \quad {\em Input: } 
  $\epsilon,m,\beta,C$ \quad {\em Output: } $X$} \\
\midrule 
\gk & $X \leftarrow 0.5$ \\
\gk & While $X \notin \{0,1\}$ \\
\gk & \hspace*{1em} Draw $B_1 \leftarrow 
  {\tt Linear\_Bernoulli\_Factory}
  (1 - (1 - \epsilon)\beta,\beta\cdot C)$ \\
\gk & \hspace*{1em} If $B_1 = 0$ then $X \leftarrow 0$ \\
\gk & \hspace*{1em} Else \\
\gk & \hspace*{2em} $B_2 \leftarrow 
 {\tt High\_Power\_Logistic\_BF}(m-2,\beta,C)$ \\
\gk & \hspace*{2em} If $B_2 = 1$ then $X \leftarrow 1$ \\
\gk & \hspace*{2em} Else $m \leftarrow m - 1$ \\
\bottomrule
\end{tabular}
\end{center}

The most important thing to note here is that like many perfect simulation
algorithms, this method employs recursion.  We do not yet have an
algorithm for completing line 3!  However, this algorithm {\tt B} can
be used as a subroutine to create such an algorithm, and then this
subroutine will call the finished algorithm.

\begin{lemma}
\label{LEM:Bcorrect}
The output of {\tt B} has distribution
\[
\berndist((m - 1)(\beta r)^{m-1}/(1 + \cdots + (\beta r)^{m-2})).
\]
\end{lemma}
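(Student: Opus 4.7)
My plan is to establish the claim by induction on $m$, viewing the while loop of {\tt B} as a recursion on $m$: every iteration either terminates with $X\in\{0,1\}$ or strictly decrements $m$, so writing $q_m:=\prob(X=1)$ for a fresh call of {\tt B} on input $m$, the pseudocode together with Lemma~\ref{LEM:HPLBFruntime} yields
\[
q_m = (\beta r)\left[\frac{(\beta r)^{m-2}}{1+\cdots+(\beta r)^{m-2}} + \left(1-\frac{(\beta r)^{m-2}}{1+\cdots+(\beta r)^{m-2}}\right)q_{m-1}\right].
\]
Here the factor $\beta r$ is the probability that $B_1=1$ (I read line 3 as the subroutine that yields a $\beta r$-coin, consistent with Figure~\ref{FIG:pprime} and with the coefficient vector $\beta\cdot(C_1,\ldots,C_k)$ passed in); the first bracketed term handles $B_2=1$; and the second handles $B_2=0$, in which case the algorithm re-enters with $m$ replaced by $m-1$, which by the recursive interpretation of the loop contributes a factor of $q_{m-1}$.

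Before applying the recursion, I verify termination and extract the base case. When $m=2$ the call in line 6 passes $0$ to {\tt High\_Power\_Logistic\_BF}, whose while loop is vacuous, so it deterministically outputs $\ind(s=0+1)=1$; hence $B_2=1$ forces $X=1$, and the outer loop of {\tt B} closes in at most $m-1$ iterations. In particular $q_2=\beta r\cdot 1=\beta r$, which equals $(m-1)(\beta r)^{m-1}/(1+\cdots+(\beta r)^{m-2})$ at $m=2$, giving the base case.

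The inductive step is then pure algebra. Substituting the hypothesis $q_{m-1}=(m-2)(\beta r)^{m-2}/(1+\cdots+(\beta r)^{m-3})$ into the recurrence and using the telescoping identity $1+\cdots+(\beta r)^{m-2}=(1+\cdots+(\beta r)^{m-3})+(\beta r)^{m-2}$, the denominator $1+\cdots+(\beta r)^{m-3}$ cancels, and the bracket collapses to $(1+(m-2))(\beta r)^{m-2}/(1+\cdots+(\beta r)^{m-2})$; multiplying by $\beta r$ delivers the claimed formula. The only delicate point beyond this bookkeeping is the interpretation of line 3: I take the argument $1-(1-\epsilon)\beta$ as the known upper bound on $\beta r$ required by the subroutine's preconditions, so that $B_1\sim\berndist(\beta r)$; granted this reading, the induction goes through cleanly.
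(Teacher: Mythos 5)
Your proof is correct and takes essentially the same route as the paper: the same induction on $m$ with base case $m=2$ (where the call to {\tt High\_Power\_Logistic\_BF} with argument $0$ deterministically returns $1$), and the same one-line recurrence $q_m = \beta r\bigl[\tfrac{(\beta r)^{m-2}}{1+\cdots+(\beta r)^{m-2}} + \tfrac{1+\cdots+(\beta r)^{m-3}}{1+\cdots+(\beta r)^{m-2}} q_{m-1}\bigr]$ collapsing to the claimed formula. You are somewhat more careful than the paper in explicitly checking termination and spelling out the reading of line 3 as a $\beta r$-coin, but the argument is the same.
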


\begin{proof}
The proof is by induction.  When $m = 2$, if $B_1 = 1$ then 
$B_2 \sim \berndist(1)$, so $X = 1$ with probability $\beta r$ as desired.

Now suppose that the result holds for $m$, consider $m + 1$.  Then
\begin{align*}
\prob(X = 1) &= \beta r\left[\frac{(\beta r)^{m-2}}{1 + \cdots + (\beta r)^{m-2}}
 + \frac{1 + \cdots + (\beta r)^{m - 3}}{1 + \cdots + (\beta r)^{m - 2}}
  \cdot \frac{(m - 2)(\beta r)^{m-2}}{1 + \cdots + (\beta r)^{m - 3}} 
  \right] \\
 &= \frac{(m - 1)(\beta r)^{m - 1}}{1 + \cdots + (\beta r)^{m-2}},
\end{align*}
completing the induction.
\end{proof}

\subsection{The new linear Bernoulli Factory}

With these preliminaries in place, the overall algorithm is as follows.

\begin{center}
\setcounter{line}{1}
\begin{tabular}{rl}
\toprule
\multicolumn{2}{l}{{\tt Linear\_Bernoulli\_Factory} \quad {\em Input: } 
  $\epsilon,C$} \quad {\em Output: } $B$ \\
\midrule 
\gk & $m \leftarrow \lceil 4.5 \epsilon^{-1} \rceil + 1,$ 
      $\beta \leftarrow 1 + 1/(m-1)$ \\
\gk & $B_1 \leftarrow {\tt A}(m,\beta \cdot C)$ \\
\gk & If $B_1 = 1$ \\
\gk & \hspace*{1em} Draw $B_2 \leftarrow \berndist(1/\beta)$ \\
\gk & \hspace*{1em} If $B_2 = 1$ then $B \leftarrow 1$ \\
\gk & \hspace*{1em} Else \\
\gk & \hspace*{2em} Draw 
  $B \leftarrow {\tt B}(m,\beta,C)$ \\
\gk & Else $B \leftarrow 0$ \\
\bottomrule
\end{tabular}
\end{center}

Now consider the expected number of coin flips used by the algorithm.
As will become clear in the proofs of Lemmas~\ref{LEM:LBFexpected} 
and~\ref{LEM:correctruntime} below, making $m = \Theta(\epsilon^{-1})$
is the correct choice.  That leaves the choice of constant up to us, and
the constant of 4.5 from Line 1 was chosen to make the overall running
time as small as possible.

This algorithm calls {\tt A} and {\tt B}.  Line 2 of {\tt B} needs to 
draw a $\berndist(\beta r)$ random variable.  The best way to draw 
these random variables is to recursively call 
{\tt Linear\_Bernoulli\_Factory}.  In order to ensure
that this back in forth calling eventually comes to a halt 
with probability 1, it is easiest to bound the total expected number of 
calls to {\tt Linear\_Bernoulli\_Factory}.

\begin{lemma}
\label{LEM:LBFexpected}
The expected number of calls to {\tt Linear\_Bernoulli\_Factory} is at
most 1.4.
\end{lemma}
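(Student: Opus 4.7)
The plan is to bound the expected branching factor $\mu$ of the recursion generated by \texttt{Linear\_Bernoulli\_Factory} (\texttt{LBF}): if a single invocation produces on average at most $\mu < 1$ direct recursive calls to \texttt{LBF}, and this bound holds \emph{uniformly} in the input $\epsilon$, then the total expected number of calls in the entire recursion tree is at most $\sum_{d=0}^\infty \mu^d = 1/(1-\mu)$. My target is $\mu \le 2/7$, which gives $1/(1-2/7) = 7/5 = 1.4$.

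First I would trace where recursion arises. Inside \texttt{LBF}, the call to \texttt{A} only uses \texttt{Logistic\_Bernoulli\_Factory}, so the unique source of fresh \texttt{LBF} calls is line 7, where \texttt{B} is invoked. This happens exactly when $B_1 = 1$ (probability $p_\beta$, by Lemma~\ref{LEM:Acorrect}) and $B_2 = 0$ (probability $(\beta-1)/\beta = 1/m$, using $\beta = m/(m-1)$). Inside \texttt{B}, the only \texttt{LBF} call is in line 3, executed once per iteration of the while loop, and that iteration continues only if $B_1 = 1$ there, an event of probability $\beta r$. Hence the number of iterations is stochastically dominated by a geometric with parameter $1-\beta r$, giving expectation at most $1/(1-\beta r)$.

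Combining these observations, and using $p_\beta \le \beta r \le 1$, one obtains
\[
\mu \;\le\; p_\beta \cdot \frac{1}{m} \cdot \frac{1}{1-\beta r} \;\le\; \frac{1}{m(1-\beta r)}.
\]
The parameter choices now close the estimate: $m - 1 = \lceil 4.5/\epsilon\rceil \ge 4.5/\epsilon$ gives $\beta = 1 + 1/(m-1) \le 1 + \epsilon/4.5$, so $\beta r \le (1+\epsilon/4.5)(1-\epsilon) \le 1 - (7/9)\epsilon$, that is $1-\beta r \ge (7/9)\epsilon$. Therefore $m(1-\beta r) \ge (4.5/\epsilon) \cdot (7/9)\epsilon = 7/2$, and $\mu \le 2/7$ independently of $\epsilon$.

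The main subtlety is that each recursive call uses a shifted parameter $\epsilon' = 1-(1-\epsilon)\beta$, so $\mu = \mu(\epsilon)$ technically varies down the tree. I would resolve this by first checking $\epsilon' > 0$ (equivalently $(1-\epsilon)\beta < 1$, which holds because $\beta \le 1+\epsilon/4.5 < 1/(1-\epsilon)$) so the recursion is well-defined, and then observing that since $\mu(\epsilon) \le 2/7$ uniformly, a straightforward induction on truncation depth gives $\sup_\epsilon N(\epsilon) \le 1 + (2/7)\sup_\epsilon N(\epsilon)$, hence the sup is at most $7/5$.
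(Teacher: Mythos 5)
Your proposal is correct and follows essentially the same route as the paper: you bound the expected number of direct recursive calls per invocation by (probability of reaching {\tt B}) $\times$ (expected iterations of {\tt B}'s loop) $\le \frac{1}{m}\cdot\frac{1}{1-\beta r}\le 2/7$, exactly as the paper bounds the probability of calling {\tt B} by $1/(m-1)$ and the loop count by $(9/7)\epsilon^{-1}$, and then both arguments sum the geometric series $\sum (2/7)^i = 1.4$. Your explicit verification that the bound holds uniformly in the shifted parameter $\epsilon' = 1-(1-\epsilon)\beta$ and the truncation-depth induction make rigorous a point the paper's generation induction treats implicitly, but the underlying argument is the same.
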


\begin{proof}
Let $m_1$ be the value of $m$ in the first call to 
{\tt Linear\_Bernoulli\_Factory}, and $\beta_1 = 1+ 1/(m_1 - 1).$
From this first call there is a chance of calling
{\tt B}, which in turn calls {\tt Linear\_Bernoulli\_Factory} with
$m_2$ and $\beta_2$.  Each of those second generation calls might call
a third generation, and so on.  To bound the expected number of calls
to {\tt Linear\_Bernoulli\_Factory} sum over all possible calls of the 
probability that that call is executed.  Let $N_i$ denote the 
number of $i$th generation calls.

The expected number of calls in the first generation is 1.
Consider a call in the second generation.  In order for that call to be
made, there must have been a call to {\tt B} from the first generation,
and all prior second generation calls from line 3 of {\tt B} must have
had $B_1 = 0$.  The number of times the while loop in {\tt B}
is executed is stochastically dominated by a geometric random variable
with mean $1/(1 - \beta_1 r)$.  Since
\begin{align}
1 - \beta_1 r &\geq 1 - (1 + 1/\lceil 4.5 \epsilon^{-1}\rceil)(1 - \epsilon) \\
 &= (7/9)\epsilon + (2/9)\epsilon^2,
\label{EQN:epsilonchange}
\end{align}
the number of calls made is bounded (in expectation) by 
$(9/7)\epsilon^{-1}$.  

But before {\tt B} is even called, first it must have held that
$B_1 = 1$ and $B_2 = 0$ in lines 2 and 4 of the first generation call
to {\tt Linear\_Bernoulli\_Factory}.

The probability that a call to 
{\tt B} is made is at most 
\[
(1 - 1/\beta_1)\beta r(1 - (\beta_1 r)^{m-1})/(1 - (\beta_1 r)^{m})
 \leq \frac{1}{m - 1}
\]

So the expected number of calls to {\tt Linear\_Bernoulli\_Factory} in
the second generation is
bounded by
\[
\mean[N_2|N_1] \leq N_1[1/(m_1-1)](9/7)\epsilon^{-1} \leq (2/7) N_1.
\]

This step forms the basis of an induction that gives
$\mean[N_i] \leq (2/7)^i N_1 = (2/7)^i$.
Therefore $\sum \mean[N_i] \leq 1/(1-2/7) = 1.4.$
\end{proof}

\begin{lemma}
\label{LEM:correct}
The output $B$ of {\tt Linear\_Bernoulli\_Factory} has 
$B \sim \berndist(r)$.
\end{lemma}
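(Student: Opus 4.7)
The plan is to establish $\prob(B = 1) = r$ by decomposing the main call into its three mutually exclusive outcome paths and then invoking the correctness lemmas already in hand. First I would write $p_\beta = \beta r(1-(\beta r)^{m-1})/(1-(\beta r)^m)$ for the output probability of the call to {\tt A} at the boosted rates $\beta C_1,\ldots,\beta C_k$ (Lemma~\ref{LEM:Acorrect}), and $p' = (m-1)(\beta r)^{m-1}/(1 + \beta r + \cdots + (\beta r)^{m-2})$ for the output probability of {\tt B} (Lemma~\ref{LEM:Bcorrect}). The algorithm returns $1$ exactly when either $B_1 = 1 = B_2$ or $B_1 = 1$, $B_2 = 0$, and the call to {\tt B} returns $1$, giving
\[
\prob(B = 1) = \frac{p_\beta}{\beta} + p_\beta\left(1 - \frac{1}{\beta}\right) p'.
\]
Simplifying this via the one-line identity $1 + (\beta - 1) p' = (1-(\beta r)^m)/(1-(\beta r)^{m-1})$, which is a direct geometric-series consequence of the definition of $p'$ in Section~3.2, collapses the right-hand side to $r$; this is exactly the motivating equation $r = p_\beta/\beta + p' p_\beta(1 - 1/\beta)$ from which $p'$ was chosen.

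The main obstacle is that Lemma~\ref{LEM:Bcorrect} was stated assuming the recursive call to {\tt Linear\_Bernoulli\_Factory} in line~3 of {\tt B} already returns an exact $\berndist(\beta r)$ variate, which is precisely the statement being proved, at the shifted parameters $\epsilon' = 1 - (1-\epsilon)\beta$ and rate $\beta r$. To break the circularity I would appeal to Lemma~\ref{LEM:LBFexpected}: because the total expected number of recursive invocations is at most $1.4$, the recursion halts with probability one, so the output is a well-defined Bernoulli variate. The generation bound $\mean[N_i] \leq (2/7)^i$ from the proof of that lemma also shows that the probability that execution ever reaches generation $n$ decays geometrically.

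With termination in hand I would truncate the recursion at depth $n$ (for example, have any generation-$(n+1)$ call return $0$) and argue by strong induction from the leaves upward that, on the event that no truncation occurred, the output agrees with an honest $r$-coin: assuming a generation-$i$ call returns $\berndist$ at its local rate, the generation-$(i-1)$ invocation of {\tt B} becomes an exact $p'$-coin, which combines with the output of {\tt A} via the identity above to produce an exact $r$-coin at generation $i-1$. Since the truncation event has probability at most $(2/7)^n \to 0$, letting $n \to \infty$ gives $\prob(B = 1) = r$. The hard part is thus entirely the termination/circularity bookkeeping; once it is dispatched, the final identity is an immediate consequence of the definitions of $p_\beta$ and $p'$.
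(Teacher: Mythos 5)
Your proposal is correct and follows essentially the same route as the paper: the same path decomposition via Lemmas~\ref{LEM:Acorrect} and~\ref{LEM:Bcorrect} collapsing algebraically to $r$, with the circularity handled by truncating the recursion and using the geometric decay of generation counts from Lemma~\ref{LEM:LBFexpected} to pass to the limit. The only cosmetic difference is that the paper truncates by replacing recursive calls beyond a finite count with an exact $\berndist(\beta r)$ oracle (so each truncated algorithm is exactly an $r$-coin), whereas your return-$0$ truncation needs the additional one-line observation that the bias it introduces is itself bounded by the truncation probability.
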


\begin{proof}
Line 2 of {\tt B} requires a draw $B_1 \leftarrow \berndist(\beta r)$.
Suppose that for the first $L$ times this line is called, the 
{\tt Linear\_Bernoulli\_Factory} is called to generate this random variable.
Then, from the $L+1$st time onwards, an oracle generates the random variable.

We show by strong induction that 
{\tt Linear\_Bernoulli\_Factory} generates from $\berndist(r)$ for any
finite $M$.  The base case when $M = 0$ operates as follows.  
Lemma~\ref{LEM:Bcorrect} immediately gives in this case that a call to 
{\tt B} returns a random variable with distribution
$\berndist((m-1)(\beta r)^{m-1}/(1 + \cdots + (\beta r)^{m-2}))$.
Lemma~\ref{LEM:Acorrect} gives that $B_1$ from line 2 has distribution
$\berndist((\beta r)^{m}/(1 + \cdots + (\beta r)^m))$.  Putting this
together gives
\begin{align*}
\prob(B = 1) &= (\beta r) \frac{1 - (\beta r)^{m-1}}{1 - (\beta r)^{m}}
 \left[\frac{1}{\beta} + 
    \left(1 - \frac{1}{\beta}\right)\frac{(m - 1)(\beta r)^{m-1}}{1 + \cdots +  
   (\beta r)^{m-2}}\right] \\
 &= (\beta r) \frac{1 - (\beta r)^{m-1}}{1 - (\beta r)^{m}}
 \left[\frac{1}{\beta} + 
    \frac{1}{\beta} \frac{(\beta r)^{m-1}}{1 + \cdots +  
   (\beta r)^{m-2}}\right] \\ 
 &= r \cdot \frac{(1 - (\beta r)^{m-1})}{1 - (\beta r)^m} \cdot 
   \frac{1 + \cdots + (\beta r)^{m-1}}{1 + \cdots + (\beta r)^{m-2}} \\
 &= r.
\end{align*}

This is the rare induction proof where the base case is just as hard as the 
induction step.  Suppose it holds for $L$, and consider what happens
for call 
limit $L + 1$.  Then the first call to {\tt Linear\_Bernoulli\_Factory}
might call {\tt B}, which might call {\tt Linear\_Bernoulli\_Factory}.  But
the first such call has used up one call, so only has $L + 1 - 1$
calls remaining, so by strong induction each 
returns the correct distribution.
Hence Lemma~\ref{LEM:Bcorrect}
holds, and the first call returns the correct distribution by the same
argument as the base case.

Let $N$ be the random number of calls to 
{\tt Linear\_Bernoulli\_Factory} needed by the algorithm.  Then let
$B$ be the output when $N$ is unbounded, and $B_M$ be the output when
a limit on calls equal to $L$ is in place.  Then 
\begin{align*}
\prob(B = 1) &= \prob(B = 1,N \leq L) + \prob(B = 1,N > L) \\
 &= \prob(B_L = 1,N \leq M) + \prob(B = 1,N > L) \\
 &= \prob(B_L = 1) - \prob(B_L = 1,N > L) + \prob(B = 1,N > L).
\end{align*}
Both $\prob(B_L,N > L)$ and $\prob(B = 1,N > L)$ are bounded above
by $\prob(N > L)$.  Since by the last lemma $\mean[N] \leq 1.4$, 
$\lim_{L \rightarrow\infty} \prob(N > L) = 0$.  The only way this can
hold for all $L$ is if $\prob(B = 1) = \prob(B_L = 1)$ for all $L$,
so $B$ has the correct distribution.
\end{proof}

\begin{lemma}
\label{LEM:correctruntime}
{\tt Linear\_Bernoulli\_Factory} uses on average at most
$7.67 C \epsilon^{-1}$ coin flips to generate $B \sim \berndist(r)$.
\end{lemma}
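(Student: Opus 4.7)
The plan is to tally the expected coin flips during one top-level invocation of {\tt Linear\_Bernoulli\_Factory} by partitioning them into three categories: (i) flips inside {\tt A} on line 2, (ii) flips inside {\tt High\_Power\_Logistic\_BF} on line 6 of {\tt B}, and (iii) flips inside recursive {\tt Linear\_Bernoulli\_Factory} calls issued on line 3 of {\tt B}. Categories (i) and (ii) are bounded per call, and then aggregated over all generations of recursive calls using the $1.4$ bound of Lemma~\ref{LEM:LBFexpected}.

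For category (i), note $m = \lceil 4.5\epsilon^{-1}\rceil + 1$ and $\beta = m/(m-1)$, so $\beta(m-1) = m$. Lemma~\ref{LEM:Aruntime} applied to the scaled input $\beta\cdot(C_1,\ldots,C_k)$ (whose coefficient sum is $\beta C$) bounds the expected flips of {\tt A} by $\beta C(m-1) = Cm \leq 4.5 C \epsilon^{-1} + 2C$. For category (ii), each invocation of {\tt B} runs its while loop at most $1/(1-\beta r)$ times in expectation; since {\tt High\_Power\_Logistic\_BF} is invoked only when $B_1 = 1$, the expected number of such calls per {\tt B}-invocation is at most $\beta r/(1-\beta r)$, each at cost $\beta C/(1-\beta r)$ by Lemma~\ref{LEM:HPLBFruntime}. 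Using $1 - \beta r \geq (7/9)\epsilon$ from~\eqref{EQN:epsilonchange}, this per-{\tt B} direct contribution is at most $(81/49)\beta^2 r C \epsilon^{-2}$. Since line 7 of {\tt Linear\_Bernoulli\_Factory} is reached (and therefore {\tt B} invoked) with probability at most $1/(m-1) \leq (2/9)\epsilon$, as computed in the proof of Lemma~\ref{LEM:LBFexpected}, the per-LBF-call direct category (ii) contribution is at most $(18/49)\beta^2 C \epsilon^{-1}$.

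Summing the per-call bounds from (i) and (ii) gives a direct per-call cost of the form $\alpha_1 C \epsilon^{-1} + \alpha_2 C$ for explicit constants $\alpha_1, \alpha_2$ depending on $\beta$. Multiplying by the expected number $1.4$ of {\tt Linear\_Bernoulli\_Factory} calls from Lemma~\ref{LEM:LBFexpected}, and using $\beta \leq 11/9$ together with $\epsilon \leq 1$, should yield the claimed bound once the constants are combined. The main obstacle is that, strictly speaking, the per-call direct cost depends on the generation-specific parameters $(\epsilon_i, C_i)$: $C$ scales by $\beta$ per generation and $\epsilon$ shrinks to $1-(1-\epsilon)\beta$ from~\eqref{EQN:epsilonchange}. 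I would address this either by (a) inflating the per-call bound by an absolute constant that uniformly covers all generations (exploiting that $\beta$ and $1 + (2/9)\epsilon$ are bounded), or (b) performing the sum $\sum_i \mean[N_i] F_i$ generation by generation as a geometric series and verifying that the ratio is strictly less than $1$, so the series converges to at most $7.67 C \epsilon^{-1}$ after the constants are tuned.
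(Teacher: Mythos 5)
Your per-call bookkeeping for categories (i) and (ii) is correct (your category (ii) computation is, if anything, cleaner than the paper's), but the aggregation over the recursive calls --- which is the real content of the lemma --- is left undone, and the primary route you propose cannot be made rigorous. There is no uniform per-call cost to multiply by the $1.4$ of Lemma~\ref{LEM:LBFexpected}: by~\eqref{EQN:epsilonchange} the tolerance handed to a generation-$i$ call is only guaranteed to satisfy $\epsilon_i \geq (7/9)^i \epsilon$, and it genuinely can shrink by a factor close to $7/9$ every generation, so $m_i$, and with it the category (i) cost of order $4.5\,C_i\epsilon_i^{-1}$, grows like $(9/7)^i$ without bound. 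Hence your fix (a) (inflate the per-call bound by an absolute constant valid for all generations) does not exist, and silently using the generation-$0$ cost for every call is unjustified; the inequality $\mean\bigl[\sum_i N_i F_i\bigr] \leq 1.4 \cdot F_0$ simply does not follow when $F_i$ increases in $i$. It would also land at the wrong number: $1.4 \times 4.5\,C\epsilon^{-1} = 6.3\,C\epsilon^{-1}$, whereas the generation-weighted sum for category (i) alone already gives about $4.5\,C\epsilon^{-1}\sum_i(18/49)^i \approx 7.1\,C\epsilon^{-1}$, which is why the paper's total comes out near $7.6\,C\epsilon^{-1}$ rather than $6.3\,C\epsilon^{-1}$.

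Your fallback (b) is the correct repair, and it is exactly the paper's proof: use from the proof of Lemma~\ref{LEM:LBFexpected} the per-generation bound $\mean[N_i]\leq (2/7)^i$ (not merely the total $1.4$), bound a generation-$i$ call's cost in {\tt A} by $C\lceil 4.5(9/7)^i\epsilon^{-1}\rceil$ plus lower-order terms and its {\tt B}/{\tt High\_Power\_Logistic\_BF} contribution by a constant times $(18/49)^i C\epsilon^{-1}$ after weighting by $\mean[N_i]$, note that the resulting series is geometric with ratio $18/49 < 1$, and sum it; one should also control the growth of the coefficient sums $C_i$ through the accumulated factors $\beta_j$, an issue you correctly flag but never resolve. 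Since you neither carry out this summation nor verify that the combined constant falls below $7.67$, the proposal as written has a genuine gap at precisely the step the lemma is about.
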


\begin{proof}
From the proof of 
Lemma~\ref{LEM:LBFexpected}, the expected number of calls
to the $i$th generation of {\tt Linear\_Bernoulli\_Factory} is 
bounded above by $(2/7)^i$.  

From~\eqref{EQN:epsilonchange}, 
at each successive generation of calls, $\epsilon$ is being multiplied by
a factor of at least $7/9$.  So an $i$th generation call to
{\tt Linear\_Bernoulli\_Factory} has an $m$ value of at most
$\lceil 4.5(9/7)^i \epsilon^{-1}\rceil + 1$, where $\epsilon$ was the input
for the 0th generation.  

Coin flips occur during the call to {\tt A}, and Lemma~\ref{LEM:Acorrect}
bounds the expected number of coin flips 
by $C \lceil 4.5(9/7)^i\epsilon^{-1} \rceil$.  So the expected total
flips coming from the $i$th generation of {\tt Linear\_Bernoulli\_Factory}
is at most $C[4.5(18/49)^i\epsilon^{-1} + (2/7)^i].$

Now look at the flips coming from an $i$th generation call to ${\tt B}$.  
This generation is only called from an $i$th generation call to 
{\tt Linear\_Bernoulli\_Factory}, of which there the expected number is at
most 
$(2/7)^i$.  
The call to {\tt B} occurs with probability at most $(\beta - 1)r$,
so at most $r/m$.  The while loop inside {\tt B} is run (on average) at
most $\epsilon^{-1}$ times, each of which could make a call to 
{\tt High\_Power\_Logistic\_BF}.  By Lemma~\ref{LEM:HPLBFruntime} this
requires at most $\beta C \epsilon^{-1}$ coin flips.  So the total 
number of coin flips from an $i$th generation call to ${\tt B}$ is at most
\[
(2/7)^i[r/m][\beta C \epsilon^{-1}(9/7)^{i+1}]\epsilon^{-1} \leq 
  (9/7)4.5^{-1}(18/49)^i C \epsilon^{-1}.
\]

Summing over these flips and the ones from {\tt Linear\_Bernoulli\_Factory}
gives a total sum of 
\[
(469/62) C\epsilon^{-1} \leq 7.57 C \epsilon^{-1}
\]
coin flips on average.
\end{proof}

\subsection{Small $r$}
\label{SEC:smallr}

Now that a recursive algorithm for large $r$ has been built, a
recursive analogue
for 
{\tt Small\_r\_1D\_Bernoulli\_Factory} works as follows.

\begin{center}
\setcounter{line}{1}
\begin{tabular}{rl}
\toprule
\multicolumn{2}{l}{{\tt Small\_r\_Bernoulli\_Factory} \quad {\em Input: } 
  $C,M$} \\
\midrule 
\gk & $\beta \leftarrow 1/(1-2M)$ \\
\gk & Draw $Y \leftarrow {\tt Logistic\_Bernoulli\_Factory}(\beta C)$ \\
\gk & Draw $B \leftarrow \berndist(1/\beta)$ \\
\gk & If $Y = 0$, then $X \leftarrow 0$ \\
\gk & Elseif $Y = 1$ and $B = 1$, then $X \leftarrow 1$ \\
\gk & Else $X \leftarrow {\tt Linear\_Bernoulli\_Factory}
  (C\beta/(\beta - 1))$ \\
\bottomrule
\end{tabular}
\end{center}

\begin{lemma}
Algorithm {\tt Small\_r\_Bernoulli\_Factory} produces a Bernoulli
distributed output with mean $Cp \leq M < 1/2$, 
and requires at most (on average)
\[
\frac{C}{(1 - 2M)(1 + Cp)}  + Cp\cdot \left[C 
  \frac{15.2}{1 - 2M + Cp}\right]
\]
coin flips to do so.
\end{lemma}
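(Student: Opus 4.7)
The plan is to follow the proof of Lemma~\ref{LEM:Smallruntime} almost verbatim, with the only substantive change being that line 6 now invokes {\tt Linear\_Bernoulli\_Factory} (with $\epsilon = 1/2$) in place of ${\tt BF}$, so the constant $19$ in the 1D bound is replaced by $2\cdot 7.57 = 15.14 \le 15.2$. Accordingly, the proof splits into a correctness check and a running-time computation, both of which are bookkeeping once one notices that the choice $\beta=1/(1-2M)$ forces the recursive call in line 6 to live in the regime $r\le 1/2$.

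For correctness, write $r=\sum_i C_ip_i = Cp$. Conditioning on the output $Y$ of the call {\tt Logistic\_Bernoulli\_Factory}$(\beta C_1,\ldots,\beta C_k)$ in line 2, Lemma~\ref{LEM:LBFoutput} gives $\prob(Y=1) = \beta r/(1+\beta r)$. Line 5 fires (and sets $X=1$) with probability $(1/\beta)\cdot\beta r/(1+\beta r) = r/(1+\beta r)$, while line 6 is reached with probability $(\beta-1)r/(1+\beta r)$. Since $\beta=1/(1-2M)$ gives $\beta/(\beta-1) = 1/(2M)$, the effective mean for the subroutine call in line 6 is $r\beta/(\beta-1) = r/(2M) \le 1/2$, so the call returns $1$ with probability exactly $r\beta/(\beta-1)$. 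Summing the two contributions,
\[
\prob(X=1) = \frac{r}{1+\beta r} + \frac{(\beta-1)r}{1+\beta r}\cdot\frac{r\beta}{\beta-1}
 = \frac{r+\beta r^2}{1+\beta r} = r,
\]
giving the desired $\berndist(Cp)$ output.

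For the running time, Lemma~\ref{LEM:LBFruntime} bounds the flips in line 2 by $\beta C/(1+\beta r) \le C/[(1-2M)(1+r)]$, which is exactly the first summand in the stated bound. Line 6 executes with probability $(\beta-1)r/(1+\beta r)$; since its effective mean lies in $[0,1/2]$, the $7.57C\epsilon^{-1}$ bound proved earlier in this section applies with $\epsilon=1/2$, giving at most $15.14\cdot C\beta/(\beta-1)$ flips per call. Multiplying probability by cost and using the identity $\beta C/(1+\beta r) = C/(1-2M+r)$ to cancel the $(\beta-1)$ collapses the product to $15.14\, rC/(1-2M+r) \le Cp\cdot [15.2\,C/(1-2M+Cp)]$, matching the second summand.

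The only step requiring genuine care is the uniformity point: to invoke the $7.57C\epsilon^{-1}$ bound one must supply $\epsilon=1/2$ as a known, fixed input to {\tt Linear\_Bernoulli\_Factory}, not merely use the realized $r$. This is exactly why $\beta$ was chosen to be $1/(1-2M)$; it guarantees $r\beta/(\beta-1) \le 1/2$ a priori from the hypothesis $r\le M$, so $\epsilon=1/2$ is always a valid input in line 6. Everything else mirrors the 1D argument of Lemma~\ref{LEM:Smallruntime} with no additional complications.
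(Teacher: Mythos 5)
Your proposal is correct and matches the paper's intent exactly: the paper's own proof of this lemma is literally the one-line remark that it is ``essentially the same as that of Lemma~\ref{LEM:Smallruntime},'' and your write-up is precisely that argument with the call to ${\tt BF}$ replaced by {\tt Linear\_Bernoulli\_Factory} at $\epsilon = 1/2$, turning the constant $19$ into $2\cdot 7.57 \leq 15.2$. Your added observation that $\beta = 1/(1-2M)$ guarantees $r\beta/(\beta-1) \leq 1/2$ a priori, so $\epsilon = 1/2$ is a legitimate fixed input to the recursive call, is exactly the point the paper leaves implicit.
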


\begin{proof}
The proof is essentially the same as that of Lemma~\ref{LEM:Smallruntime}.
\end{proof}

\section{Lower bound}
\label{SEC:lower}

To see why it is unlikely that a method that uses fewer than 
$C\epsilon^{-1}$ coin flips can be constructed, consider building an
unbiased estimate of $p$.

The standard estimate is to generate $X_1,\ldots,X_n$ iid $\berndist(p)$,
and then use the sample average $\hat p_n = (X_1 + \cdots + X_n)/n$ as
an unbiased estimate of $p$.  This estimate is unbiased, and has 
variance $p(1 - p)/n$.

Now consider the estimate $Y/C$, where $Y \sim \berndist(Cp)$.  Then
$\mean[Y/C] = Cp/C = p$ so this estimate is also unbiased, and the 
variance is $Cp(1 - Cp)/C^2 = p(1 - Cp)/C.$  Therefore, this estimate
that used one draw from $\berndist(Cp)$ has the variance of 
the estimate that used 
$n = C(1 - p)/(1 - Cp)$ draws from the $p$-coin.

The Cram\'er-Rao lower bound (see, for instance~\cite{bickeld1977})
on the variance of an unbiased estimate of $p$
is 
\[
\frac{p(1 - p)}{n}.
\]
That is, any unbiased estimate that uses up to $n$ flips of the $p$-coin
must have variance at least $p(1-p)/n$.  That immediately gives that 
any algorithm for generating a $Cp$-coin that uses a deterministic number
$n$ of coin flips must have $n \geq C(1 - p)/(1 - Cp)$.  Of course,
this does not quite apply to a Bernoulli Factory, because here a random
number of coin flips is used.  

However, it is strong evidence that $C(1 - p)/(1 - Cp)$ is a lower bound
on the expected number of coin flips needed by an algorithm.

\section{Multivariate Bernoulli Factory}
\label{SEC:multivariate}

This new algorithm was designed for the single coin problem, in this
section consider generating a coin flip whose probability
of heads is the sum of the probability of heads on two different coins
each of which has an unknown probability of heads.  
Unlike the single coin flip, there
is no immediate application, however, it is useful to know that the
single coin algorithm can be easily generalized to solve this problem
should the need arise.

More generally, the goal is now to generate a coin flip with probability
\[
r = C_1 p_1 + \cdots + C_k p_k
\]
of heads, where $r$ is bounded away from 1, using as few flips of the coins
as possible.  When $k = 1$, this is the linear Bernoulli factory studied
in the previous sections.  Formally, a multivariate Bernoulli factory
is defined as follows.
\begin{definition}
Given a computable function $f:[0,1]^k \rightarrow [0,1]$, a
{\em multivariate Bernoulli factory} is a computable function 
\[
{\cal A}:[0,1] \times \left(\{0,1\}^{\{1,2,\ldots\}}\right)^k \rightarrow
 \{0,1\},
\]
such that if $U \sim \unifdist([0,1])$ and the $X_{i,j}$ are independent
random variables with 
$(\forall i \in \{1,\ldots,k\})(\forall j \in \{1,2,\ldots\})
 (X_{i,j} \sim X_i)$, then the following properties hold.
\begin{enumerate}
\item
There exist random variables 
$(T_1,\ldots,T_k) \in \{1,2,\ldots\}^k$ such that the value of 
${\cal A}(U,\{X_{1,i}\}_{i=1}^\infty,\ldots,\{X_{k,i}\}_{i=1}^\infty)$ only
depends on the values of $\{X_{1,i}\}_{i=1}^{T_1},\ldots,\{X_{k,i}\}_{i=1}^{T_k}$,
and for all $(t_1,\ldots,t_k)$, 
the event $(T_1,\ldots,T_k) = (t_1,\ldots,t_k)$ is measurable with respect
to $\{X_{1,i}\}_{i=1}^{t_1},\ldots,\{X_{k,i}\}_{i=1}^{t_k}$.
\item
${\cal A}(U,\{X_{1,i}\}_{i=1}^\infty,\ldots,\{X_{k,i}\}_{i=1}^\infty) \sim 
 \berndist(f(p_1,p_2,\ldots,p_k)).$
\end{enumerate}
Call $T_1 + \cdots + T_k$ the {\em running time} of the algorithm.
\end{definition}

The key to the single coin algorithm was generating a random
variable that was exponential with rate parameter $r$.  In the 
single coin case, this was done by generating a Poisson process of rate
$C$, then thinning.

For the multivariate coin case, consider generating $k$ independent
Poisson point processes $P_1,\ldots,P_k$, where $P_i$ has rate $C_i$.
Then thin each process $P_i$ with coin $i$ to obtain a Poisson point process
of rate $C_i p_i$.  The union of these is a new Poisson point process of
rate $r$, and the rest of the algorithm operates as before.  The proofs
of all the lemmas and Theorem~\ref{THM:second} then proceeds in 
exactly the same way as given earlier.

\section{Acknowledgments}

This research supported by NSF DMS-1418495.


\begin{thebibliography}{7}
\providecommand{\natexlab}[1]{#1}
\expandafter\ifx\csname urlstyle\endcsname\relax
  \providecommand{\doi}[1]{doi:\discretionary{}{}{}#1}\else
  \providecommand{\doi}{doi:\discretionary{}{}{}\begingroup
  \urlstyle{rm}\Url}\fi

\bibitem[{Asmussen, Glynn, and Thorisson(1992)}]{asmussengt1992}
Asmussen S, Glynn PW, and Thorisson H (1992)
\newblock Stationarity detection in the initial transient problem.
\newblock ACM Trans Modeling and Computer Simulation 2(2):130--157.

\bibitem[{Bickel and Doksum(1977)}]{bickeld1977}
Bickel PJ and Doksum KA (1977)
\newblock Mathematical Statistics.
\newblock Prentice Hall, New Jersey.

\bibitem[{Huber(to appear)}]{hubertoappearb}
Huber M (to appear).
\newblock Nearly optimal {B}ernoulli factories for linear functions.
\newblock Combin Probab Comput, {a}rXiv:1308.1562.

\bibitem[{Keane and O'Brien(1994)}]{keaneo1994}
Keane MS and O'Brien GL (1994)
\newblock A {B}ernoulli factory.
\newblock ACM Trans Modeling and Computer Simulation 4:213--219.

\bibitem[{{\L}atuszy\'nski, Kosmidis, Papspiliopoulos, and
  Roberts(2011)}]{latuszynskikpr2011}
{\L}atuszy\'nski K, Kosmidis I, Papspiliopoulos O, and Roberts G (2011)
\newblock Simulating events of unknown probabilities via reverse time
  martingales.
\newblock Random Structures Algorithms 38(4):441--452.

\bibitem[{Lee, Doucet and {\L}atusyz\'nski(2014)}]{leedl2014}
Lee A, Doucet A, and {\L}atuszy\'nski K (2014)
\newblock
Perfect simulation using atomic regeneration with application
to Sequential Monte Carlo.
\newblock
arXiv:1407.5770v1.

\bibitem[{Nacu and Peres(2005)}]{nacup2005}
Nacu S and Peres Y (2005)
\newblock Fast simulation of new coins from old.
\newblock Ann Appl Probab 15(1A):93--115.

\bibitem[{Walker(1974)}]{walker1974}
Walker AJ (1974)
\newblock New fast method for generating discrete random numbers with arbitrary
  frequency distributions.
\newblock Electronics Letters 10(8):127.

\end{thebibliography}
\bibliographystyle{plainnat}

\end{document}